\let\newpf\proof \let\proof\relax 
\newenvironment{pf}{\newpf[\proofname]}{\qed\endtrivlist}
\def\DC{{\mathrm{DC}}}
\def\bm{\begin{matrix}}
\def\em{\end{matrix}}
\newcommand{\bt}{\begin{thm}}
\newcommand{\et}{\end{thm}}
\newcommand{\bl}{\begin{lemma}}
\newcommand{\el}{\end{lemma}}
\newcommand{\beq}{\begin{eqnarray}}
\newcommand{\eeq}{\end{eqnarray}}
\def\be{\begin{equation}}
\def\ee{\end{equation}}
\def\ba{{\begin{align}}}
\def\ea{{\end{align}}}
\def\SO{{\mathrm {SO}}}
\newcommand{\la}{\label}
\def\H{{\mathbb H}}
\def\0{{\mathbf 0}}
\def\SL{{\mathrm {SL}}}
\def\PSL{{\mathrm {PSL}}}
\newtheorem{thm}{Theorem}[section]
\newtheorem{cor}[thm]{Corollary}
\newtheorem{lemma}[thm]{Lemma}
\theoremstyle{remark}
\newtheorem{rem}{Remark}[section]
\newtheorem*{conj}{Spectral Dichotomy Conjecture}
\numberwithin{equation}{section}
\def \bn {\hfill \\ \smallskip\noindent}
\theoremstyle{definition}
\def\proof{\bn {\bf Proof.} }
\def\note#1
\def\tr{{\text{tr}}}
\newcommand{\id}{\operatorname{id}}
\newcommand{\eps}{{\epsilon}}
\newcommand{\C}{{\mathbb C}}
\newcommand{\Q}{{\mathbb Q}}
\newcommand{\R}{{\mathbb R}}
\newcommand{\Z}{{\mathbb Z}}
\def\B0{{\bold{0}}}
\def\Empty{}
\newcommand\oplabel[1]{
  \def\OpArg{#1} \ifx \OpArg\Empty {} \else
  	\label{#1}
  \fi}
\newcommand{\comm}[1]{}
\newcommand{\comment}[1]{}
\begin{document}

\title[Continuity of spectral measures]
{H\"older continuity of absolutely continuous spectral measures for
one-frequency Schr\"odinger operators}

\author{Artur Avila and Svetlana Jitomirskaya}

\address{
CNRS UMR 7599,
Laboratoire de Probabilit\'es et Mod\`eles al\'eatoires\\
Universit\'e Pierre et Marie Curie--Bo\^\i te courrier 188\\
75252--Paris Cedex 05, France
}
\email{artur@ccr.jussieu.fr}

\address{
University of California, Irvine, California, 92697
}
\email{szhitomi@uci.edu}
\thanks{$^\dag$This work was supported in part by NSF, grant DMS-0601081,
and BSF, grant 2006483.  This research was partially conducted during the period A.A.
served as a Clay Research Fellow.}

\begin{abstract}
We establish sharp results on the modulus of continuity of the distribution
of the spectral measure for one-frequency Schr\"odinger operators with
Diophantine frequencies in the region of absolutely continuous spectrum.
More precisely, we establish $1/2$-H\"older continuity near almost reducible
energies (an essential support of absolutely continuous spectrum).
For non-perturbatively small potentials (and for the almost Mathieu operator
with subcritical coupling), our results apply for all energies.


\end{abstract}

\setcounter{tocdepth}{1}

\maketitle


\section{Introduction}

In this work we study absolutely continuous spectral measures of
(one-frequency) quasiperiodic Schr\"odinger operators $H=H_{\lambda v,\alpha,\theta}$
defined on $\ell^2(\Z)$
\be \label{11}
(H u)_n=u_{n+1}+u_{n-1}+\lambda v(\theta+n \alpha) u_n
\ee
where $v$ is the potential,
$\lambda \in \R$ is the coupling constant, $\alpha \in \R \setminus \Q$ is the
frequency and $\theta \in \R$ is the phase.  A central example is
given by the almost Mathieu operator, when $v(x)=2\cos(2 \pi x)$.

Except where otherwise noted, below we assume the frequency $\alpha$ to
be Diophantine in the usual sense (see definition in Section \ref{3}), and $v$ analytic.

Absolutely continuous spectrum occurs only rarely in one-dimensional Schr\"odinger operators \cite{reml}.
Until recently it was expected that in the class of ergodic Schr\"odinger operators it only occurs
for almost periodic potentials, a conjecture recently disproved \cite{akl}. Quasiperiodic operators
with analytic potential stand out in this respect as the family 
$\{H_{\lambda v,\alpha,\theta}\}_{\lambda \in \R},$ for small couplings $\lambda,$ is always in the
metallic phase (has good transport properties) with zero Lyapunov exponents and absolutely continuous spectrum.

We will be concerned with the regularity of spectral measures.  More
precisely, given a function
$f \in \ell^2(\Z)$ with $\|f\|=1$, and letting
$\mu^f=\mu^f_{\lambda v,\alpha,\theta}$ be
the associated spectral measure\footnote {That is, $\mu^f(X)=\|\Pi_X(f)\|^2$
where $\Pi_X:\ell^2(\Z) \to \ell^2(\Z)$ is the spectral projection
associated to the Borel set $X \subset \R$.}, what
can be said of the modulus of continuity of the distribution of $\mu^f$? 
We will assume that $f$ is a reasonably localized function in the sense
that $f \in \ell^1(\Z)$ (notice that without regularity assumptions, there
are no non-trivial restrictions on $\mu^f$: any probability measure
absolutely continuous with respect to some spectral measure is still a
spectral measure).  Our first result concerns small potentials:

\begin{thm} \label {small}

For every $v \in C^\omega(\R/\Z,\R)$, there exists
$\lambda_0=\lambda_0(v)>0$ such that if $|\lambda|<\lambda_0$ and $\alpha$
is Diophantine then $\mu^f_{\lambda v,\alpha,\theta}(J) \leq
C(\alpha,\lambda v) |J|^{1/2} \|f\|_{\ell^1}^2$, for all intervals $J$ and all $\theta.$  For the almost Mathieu
operator, one can take $\lambda_0=1$.

\end{thm}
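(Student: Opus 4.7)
\bigskip\noindent\textbf{Proof plan.}

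My strategy has three layers, reducing the statement to a transfer-matrix computation for a near-constant elliptic cocycle.

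First, I reduce from general $\ell^1$ vectors to point spectral measures. Since $\Pi_J f = \sum_n f_n \Pi_J \delta_n$, the triangle inequality gives $\mu^f(J)^{1/2} = \|\Pi_J f\|_{\ell^2} \leq \sum_n |f_n|\, \mu^{\delta_n}(J)^{1/2} \leq \|f\|_{\ell^1}\sup_n \mu^{\delta_n}(J)^{1/2}$. Covariance of $H_{\lambda v,\alpha,\theta}$ under integer translation yields $\mu^{\delta_n}_{\lambda v,\alpha,\theta} = \mu^{\delta_0}_{\lambda v,\alpha,\theta+n\alpha}$, so the claim reduces to a $\theta$-uniform bound
\[
\mu^{\delta_0}_{\lambda v,\alpha,\theta}(J) \leq C(\alpha,\lambda v)\, |J|^{1/2}.
\]

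Second, I translate this into a bound on the Borel transform $M_\theta(z) = \langle \delta_0, (H_{\lambda v,\alpha,\theta}-z)^{-1} \delta_0\rangle$. The Poisson comparison $\mu^{\delta_0}_\theta([E-\epsilon,E+\epsilon]) \leq 2\epsilon\operatorname{Im} M_\theta(E+i\epsilon)$ shows that it suffices to prove
\[
\operatorname{Im} M_\theta(E+i\epsilon) \leq C\, \epsilon^{-1/2}
\]
uniformly in $E$, $\theta$, $\epsilon>0$. Using the standard decomposition $M_\theta = -1/(m_+ + m_-)$ via Weyl--Titchmarsh half-line functions, this reduces to controlling the location of the attracting and repelling directions of the complexified Schr\"odinger cocycle $(\alpha, A_{z,\theta})$ as $\operatorname{Im} z \searrow 0$.

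The third and main layer invokes almost reducibility, which applies for every $E$ in the spectrum in the small-$\lambda$ regime (and, by Aubry duality, in the subcritical almost Mathieu regime $|\lambda|<1$). For such $E$ there is an analytic conjugacy $B_E$ on an appropriate complex strip bringing the real-energy cocycle arbitrarily close to a constant elliptic rotation $R_{\varphi(E)}$. Holomorphically extending to $z = E+i\epsilon$ and conjugating $(\alpha,A_{z,\theta})$ by $B_E$ produces a small perturbation of the constant hyperbolic rotation $R_{\varphi(E)+i\eta}$ with $\eta \asymp \epsilon$. For the constant model the Weyl functions are explicit and exhibit the textbook square-root band-edge behavior $\operatorname{Im} m = O(\epsilon^{-1/2})$; undoing the conjugacy costs only a bounded factor $\|B_E\|^2$ and yields the required estimate on $M_\theta$.

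The principal obstacle is uniformity of $C$ in $E$ (equivalently, in $\theta$). Quantitative almost reducibility produces conjugacies $B_{E,n}$ at scales $n\to\infty$; the right scale depends on $\epsilon$, and $\|B_{E,n}\|$ may degenerate when $E$ approaches a Diophantine resonance of $\alpha$ or a narrow gap edge. Since the $1/2$ exponent is sharp, any slack in the control of $\|B_{E,n}\|$ would cost a fractional power of $\epsilon$. The heart of the argument is therefore a multi-scale bookkeeping matching the resonance scales of $\alpha$ with the scales of almost reducibility, so that the relevant $\|B_{E,n(\epsilon)}\|$ is bounded by a constant depending only on $\alpha$ and $\lambda v$; for the almost Mathieu operator Aubry duality provides additional self-dual structure that both widens the allowed range of $\lambda$ to $|\lambda|<1$ and streamlines this bookkeeping.
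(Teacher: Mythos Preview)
Your first two layers are correct and match the paper almost exactly: the reduction from $\ell^1$ vectors to $\mu^{\delta_0}$ via shift covariance and the triangle inequality, and the passage from $\mu(E-\epsilon,E+\epsilon)$ to $\Im M(E+i\epsilon)$, are precisely what the paper does.

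The third layer, however, diverges from the paper's argument and contains a real gap. You propose to conjugate by $B_{E,n}$ to a near-constant elliptic model, compute the Weyl functions there, and undo the conjugacy at cost $\|B_{E,n}\|^2$; you then assert that ``the relevant $\|B_{E,n(\epsilon)}\|$ is bounded by a constant depending only on $\alpha$ and $\lambda v$.'' That assertion is false: in the quantitative almost-reducibility output (the paper's $(C,c,\epsilon_0)$-goodness) the conjugacy satisfies only $\|\Phi\|\le C n^C$, where $n=|n_j|+1$ is the current resonance scale, and $n$ is not uniformly bounded over $E$. No choice of scale $n(\epsilon)$ makes the conjugacy norm uniformly bounded, so simply paying $\|B\|^2$ would destroy the $1/2$-H\"older exponent. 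Relatedly, for the genuinely constant elliptic model the Weyl function is \emph{bounded} on the interior of the band, so the $\epsilon^{-1/2}$ you cite comes only from band edges and does not by itself deliver the uniform estimate you need after conjugation.

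The paper does not try to bound $\|B\|$ uniformly. Instead it passes through the Jitomirskaya--Last subordinacy machinery: it introduces $P_{(k)}=\sum_{j=1}^k A_{2j-1}^*A_{2j-1}$, relates $\psi(m^+(E+i\epsilon_k))$ to $\epsilon_k\|P_{(k)}\|$ when $\det P_{(k)}=(2\epsilon_k)^{-2}$, and proves the dimensionless inequality $\|P_{(k)}\|\le C\|P_{(k)}^{-1}\|^{-3}$. The latter is obtained by conjugating not to a constant but to an explicit \emph{triangular} model $T=\bigl(\begin{smallmatrix} e^{2\pi i\theta}&t\\0&e^{-2\pi i\theta}\end{smallmatrix}\bigr)$ with a single resonant Fourier mode $\hat t_r$; the computation of $\|X\|$ and $\|X^{-1}\|^{-1}$ for that model is where the $\epsilon^{-1/2}$ ultimately comes from. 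Crucially, the polynomial loss $\|\Phi\|^{O(1)}\le C n^{O(1)}$ is compensated by the exponential smallness $|\hat q_r|\le C e^{-cn(\ln(1+n))^{-C}}$ of the relevant off-diagonal Fourier coefficient: the two factors appear \emph{multiplicatively} in the ratio $\|P_{(k)}\|/\|P_{(k)}^{-1}\|^{-3}$ and their product is uniformly bounded. This balance---polynomial conjugacy cost versus exponential smallness of the resonant mode---is the missing mechanism in your sketch, and it is not expressible as a bound on $\|B_{E,n(\epsilon)}\|$ alone.
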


\begin{rem}

The smallness constant $\lambda_0$ only depends on bounds on the
analytic extension of $v$ to some band $|\Im x|<\epsilon$.  This is
important for applications to arbitrary potentials (see below).
The constant $C$ depends on bounds on the analytic extension of $\lambda v$
and on the Diophantine properties of $\alpha$.

\end{rem}

Recall that averaging the distributions of spectral measures with respect to
the phase $\theta$ yields the integrated density of states (i.d.s.), whose
regularity is therefore significantly simpler to analyze.  Indeed in \cite
{AJ}, it is shown that the i.d.s. is $1/2$-H\"older (and no more, see below)
in the setting of Theorem \ref {small}.  The averaged $1/2$-H\"older
behavior is compatible with point spectrum
(consider the almost Mathieu operator with $\lambda>1$, \cite
{J}, \cite {AJ}), and hence
discontinuous distributions.  The key point of
Theorem \ref {small} is that here we are
able to control the behavior of each individual spectral measure, uniformly
on $\theta$.

The study of small potentials is not merely interesting on its own: it gives
information about the absolutely continuous spectrum of an arbitrary
potential.  To make this precise, one introduces the notion of {\it almost
reducibility}: roughly speaking
an energy is almost reducible if the associated cocycle
$(\alpha,A^{(E-\lambda v)})$
(a dynamical system
\be
(x,w) \mapsto (x+\alpha,A^{(E-\lambda v)} \cdot w),
\ee
\be
A^{(E-\lambda v)}=\left (\bm E-\lambda v(x) & -1 \\ 1 & 0 \em \right ),
\ee
that describes the behavior of solutions of
the eigenvalue equation
$H_{\lambda v,\alpha,\theta} u=E u$), is analytically conjugate (in a
uniform band) to the associated cocycle of some $(\alpha,A^{(E'-v')})$
with $v'$ arbitrarily small.  It follows from renormalization \cite {AK},
\cite {AK2}, that almost reducible energies (indeed {\it reducible}
energies, for which $v'$ can be taken as $0$) form an essential support of
absolutely continuous spectrum.  In \cite {AJ}, almost reducibility was
proved for all energies in the case of small potentials (indeed the
same setting of Theorem \ref {small}), which implies that almost
reducibility is {\it stable}
(in particular, the set of almost reducible energies is open).

\begin{thm} \label {gen}

Let $v \in C^\omega(\R/\Z,\R)$ and let $\alpha$ be Diophantine.  Then for
any almost reducible energy $E \in \Sigma_{v,\alpha}$, (thus for a.e. energy in $\Sigma_{v,\alpha}^{ac}$) 
there exists $C,\epsilon>0$ such that if $J \subset (E-\epsilon,E+\epsilon)$
is an interval then, for all $\theta,\;$ 
$\mu^f_{v,\alpha,\theta}(J) \leq C |J|^{1/2} \|f\|_{\ell^1}$.

\end{thm}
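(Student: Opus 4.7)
The plan is to reduce to the small-coupling result, Theorem \ref{small}, by means of almost reducibility. By hypothesis, for any $\delta > 0$ the cocycle $(\alpha, A^{(E-v)})$ is analytically conjugate in a uniform complex band to a Schr\"odinger cocycle $(\alpha, A^{(E'-v')})$ with $\|v'\|<\delta$: explicitly there exists $B \in C^\omega(\R/\Z, \SL(2,\R))$ satisfying $B(x+\alpha)^{-1}A^{(E-v)}(x)B(x) = A^{(E'-v')}(x)$. I would choose $\delta$ below the smallness threshold $\lambda_0$ from Theorem \ref{small}, applied with the analytic band width matching that of $B$, so that the reduced operator $H_{v',\alpha,\theta}$ satisfies the $1/2$-H\"older bound from Theorem \ref{small} uniformly in $\theta$.

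For energies $\widetilde E = E + t$ with $|t|$ small, the conjugated matrix $B(x+\alpha)^{-1}A^{(\widetilde E - v)}(x)B(x)$ differs from $A^{(E'-v')}(x)$ by a term of size $O(|t|\sup\|B\|^2)$, so a short interval $J \ni E$ corresponds under the conjugation to an interval $\widetilde J$ of comparable length near $E'$ for the reduced operator. The $n$-step transfer matrices are related by $M_n^{(\widetilde E)}(\theta) = B(\theta + n\alpha)\,\widetilde M_n^{(E' + t)}(\theta)\,B(\theta)^{-1}$ up to $O(|t|)$ corrections. Using a Poisson-type majorant $\mu^f(J) \leq C|J|\cdot \Im\, m^f(E + i|J|)$ together with the standard representation of $m^f$ through the Green's function and the transfer matrices, one controls $\mu^f_{v,\alpha,\theta}(J)$ by $\mu^{\widetilde f}_{v',\alpha,\theta}(\widetilde J)$ times a constant, where $\widetilde f$ is the image of $f$ under the natural $B$-induced pairing and satisfies $\|\widetilde f\|_{\ell^1} \leq \sup\|B\|\cdot \|f\|_{\ell^1}$. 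Applying Theorem \ref{small} to $\mu^{\widetilde f}_{v',\alpha,\theta}(\widetilde J)$ yields the desired estimate.

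The main obstacle is this final transfer step: an analytic cocycle conjugation does not induce a genuine $\ell^2$ operator conjugation, so the comparison of spectral measures must be carried out at the level of generalized eigenfunctions (or equivalently Green's functions), with all quantitative constants tracked through $\sup\|B\|$, the width of the complex band of analyticity, and the Diophantine constants for $\alpha$. The hypothesis $f \in \ell^1(\Z)$ is essential here, since it is precisely what permits turning transfer-matrix bounds into honest spectral measure bounds (via estimates of $\|(H - E - i\epsilon)^{-1} f\|$ type), a step that would fail under only an $\ell^2$ assumption.
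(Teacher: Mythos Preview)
Your high-level idea---use almost reducibility to reduce to the small-coupling case---is indeed the right one, and matches the paper. But your implementation has a genuine gap at precisely the step you yourself flag as ``the main obstacle,'' and you do not actually overcome it.

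The problem is this. A cocycle conjugacy $B$ relates trajectories of $(\alpha,A^{(E-v)})$ to trajectories of $(\alpha,A^{(E'-v')})$, but it does \emph{not} induce any map $f\mapsto \tilde f$ on $\ell^2(\Z)$ intertwining the two Schr\"odinger operators; there is no ``natural $B$-induced pairing'' of the kind you invoke. Moreover, for $\tilde E=E+t$ with $t\neq 0$, the conjugated matrix $B(x+\alpha)^{-1}A^{(\tilde E-v)}(x)B(x)$ is generically \emph{not} of Schr\"odinger form at all, so there is no ``interval $\tilde J$ near $E'$ for the reduced operator'' to which Theorem~\ref{small} could be applied as a black box. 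One can bring it back to Schr\"odinger form via Lemma~\ref{sch}, but then the potential $\tilde v$ varies with $t$, and you are no longer comparing spectral measures of a single fixed operator. Your sketch of the comparison via Green's functions and the Poisson majorant is not wrong in spirit, but carrying it out rigorously amounts to redoing the Jitomirskaya--Last analysis from scratch---which is exactly the content of Theorem~\ref{est}, not a shortcut around it.

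The paper avoids this difficulty entirely by never comparing spectral measures of two different operators. Instead, almost reducibility is used (via Lemma~\ref{almost} and the \cite{AJ} estimates, packaged as Theorem~\ref{CC3}) to show that the \emph{original} cocycle $(\alpha,A^{(E'-v)})$ is $(C,c,\epsilon_0)$-good for every $E'$ in the spectrum near $E$. The point is that goodness is a purely dynamical condition, manifestly stable under analytic conjugation (the constants just absorb $\|B\|$). Theorem~\ref{est} then converts goodness directly into a bound $\mu_x(J)\leq C'|J|^{1/2}$ for the canonical spectral measure $\mu_x=\mu^{e_{-1}}_x+\mu^{e_0}_x$ of the original operator, via the $m$-function/transfer-matrix machinery (the matrices $P_{(k)}$). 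Finally, the passage to general $f\in\ell^1$ uses only shift covariance and the triangle inequality for the seminorm $f\mapsto \mu^f_x(J)^{1/2}$: since $\mu^{e_k}_x\leq \mu_{x+k\alpha}$, one gets $\mu^f_x(J)^{1/2}\leq \sum_k |f(k)|\,\mu_{x+k\alpha}(J)^{1/2}$. This last step is where $\ell^1$ enters, and it requires no comparison between different potentials.
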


As far as we know Theorems \ref{gen},\ref{small} are the first results on fine properties of individual absolutely continuous spectral measures of ergodic operators. 
 
Let us call
attention to the following conjecture that clarifies the fundamental
importance of understanding almost reducibility:

\begin{conj}

For typical $v,\alpha,\theta$, $H_{v,\alpha,\theta}$
is the direct sum of operators $H_+$
and $H_-$ with disjoint spectra such that $H_+$ is ``localized'' and
$H_-$ is ``almost reducible''.

\end{conj}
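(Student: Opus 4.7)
The plan is to combine Avila's global theory of one-frequency analytic $\SL(2,\R)$-cocycles with Anderson localization methods. Global theory stratifies each $E \in \Sigma_{v,\alpha}$ as \emph{subcritical}, \emph{critical}, or \emph{supercritical} according to how the complexified Lyapunov exponent behaves on a strip; subcritical energies are (conjecturally, and in many cases already) almost reducible, while supercritical energies have uniformly positive Lyapunov exponent. I would take $\Sigma^- \subset \Sigma_{v,\alpha}$ to be the set of almost reducible energies and $\Sigma^+ \subset \Sigma_{v,\alpha}$ to be the supercritical ones. These are disjoint Borel sets, and the candidate decomposition is $H = H_- \oplus H_+$, where $H_\pm$ is the restriction of $H_{v,\alpha,\theta}$ to the spectral subspace associated with $\Sigma^\pm$.

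For $H_-$, stability of almost reducibility \cite{AJ} shows that $\Sigma^-$ is open in $\Sigma_{v,\alpha}$ and is an essential support of $\Sigma_{v,\alpha}^{ac}$. Theorem~\ref{gen} then supplies $1/2$-H\"older spectral measures on $\Sigma^-$ uniformly in $\theta$, so $H_-$ is purely absolutely continuous and, by the very definition of $\Sigma^-$, satisfies the ``almost reducible'' clause of the conjecture automatically.

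For $H_+$, I would prove Anderson localization at every supercritical energy, uniformly in the spectral parameter. The strategy is the Bourgain--Goldstein--Jitomirskaya one: use large deviation estimates for transfer matrices (available thanks to positivity of the Lyapunov exponent on a whole strip in the supercritical regime), combine them with the avalanche principle to obtain exponential decay of finite-volume Green's functions on most scales, and eliminate double resonances between the orbit $\{\theta+n\alpha\}_{n \in \Z}$ and the set of bad energies via arithmetic conditions on $\theta$. The output should be a complete orthonormal basis of exponentially decaying eigenfunctions whose eigenvalues are dense in $\Sigma^+$, so that $H_+$ is pure point with ``localized'' eigenfunctions.

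The remaining critical set $\Sigma^{\mathrm{crit}} = \Sigma_{v,\alpha} \setminus (\Sigma^- \cup \Sigma^+)$ should be empty for a prevalent choice of $v$, because criticality is a codimension-infinity condition in the space of analytic cocycles; for the excluded non-prevalent $v$ one would instead prove that $\Sigma^{\mathrm{crit}}$ carries zero spectral weight for every $\theta$, by combining Kotani theory (to rule out absolutely continuous contribution) with a quantitative stratification ruling out singular contribution. The main obstacle is Anderson localization in the required generality: while it is understood for the almost Mathieu operator and for potentials with ``positive Lyapunov exponent on a strip'', producing a $\theta$-uniform and spectrum-uniform localization statement for arbitrary analytic $v$ across the entire supercritical part of the spectrum requires controlling infinitely many resonance scales at once, and this is the step for which no general technique is yet available. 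A secondary obstacle is isolating what ``typical'' should mean so that the elimination of $\Sigma^{\mathrm{crit}}$ above becomes a theorem rather than a hope.
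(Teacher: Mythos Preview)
The statement is a \emph{conjecture}, not a theorem; the paper does not prove it. The remarks following the conjecture lay out the state of the art and explicitly identify what remains open, so your proposal should be compared against that discussion rather than against a proof.

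Your overall architecture---stratify via Avila's global theory into subcritical, critical, supercritical; localize on the supercritical part; obtain almost reducibility on the subcritical part; and eliminate the critical set for prevalent $v$---matches the paper's roadmap. However, you invert the paper's assessment of where the difficulty lies. According to the remarks, the $H_+$ side is essentially \emph{done} in the typical setting: disjointness of the positive and zero Lyapunov exponent parts of the spectrum is established in \cite{A2}, \cite{A3}; this gives a uniform minoration of $L(E)$ on $\sigma(H_+)$, and then localization (including dynamical localization) follows from \cite{BG}, \cite{BJ3} together with continuity of the Lyapunov exponent \cite{BJ1}. So Anderson localization on the supercritical part is not the ``main obstacle'' you make it out to be.

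The genuine gap is on the $H_-$ side, and your formulation hides it. You define $\Sigma^-$ as the set of energies that are \emph{already known} to be almost reducible, which makes the ``almost reducible'' clause tautological, and you push everything else into $\Sigma^{\mathrm{crit}}$. But then $\Sigma^{\mathrm{crit}}$ contains not only genuinely critical energies but also all subcritical energies not yet shown to be almost reducible. Your argument for disposing of $\Sigma^{\mathrm{crit}}$ (``criticality is a codimension-infinity condition'') addresses only the genuinely critical part; it says nothing about subcritical-but-not-almost-reducible energies, and there is no reason to expect those to form a negligible set without further input. The paper is explicit that this is the crux: the Spectral Dichotomy Conjecture is \emph{reduced} to the Almost Reducibility Conjecture (subcriticality implies almost reducibility), which it calls ``the main outstanding problem in the theory''. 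Your proposal neither proves nor even names this step.
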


\begin{rem}

\begin{enumerate}

\item Typical should be understood in the measure-theoretical sense of {\it
prevalence}.  In particular frequencies may be assumed to be Diophantine.

\item Localization for $H_+$
means both what is usually understood as Anderson
localization (pure point spectrum with exponentially decaying
eigenfunctions) or dynamical localization.\marginpar {Explain}
Almost reducibility for $H_-$
just means that the spectrum of $H_-$ is the closure of almost
reducible energies for $H$, but as described above, it indeed provides a
very fine spectral description: in particular the results
of \cite {AJ} and this paper apply to $H_-$, e.g., it has
absolutely continuous
spectral measures with $1/2$-H\"older distributions.

\item By Kotani theory, see e.g. \cite {LS}, if the conjecture holds then
$H_+$/$H_-$ must be defined by spectral projection on the
parts of the spectrum where the associated cocycle has positive/zero
Lyapunov exponent
\be
L(E)=\lim \frac {1} {n} \int_{\R/\Z} \ln \|A^{(E-v)}_n(x)\| dx,
\ee
\be
A_n^{(E-v)}(x)=A^{(E-v)}(x+(n-1) \alpha) \cdots A^{(E-v)}(x).
\ee
The result of disjointness of the spectra for this
decomposition was recently established (in the typical setting) \cite {A2}, \cite {A3}.

\item With $H_+$ defined as above, the precise spectral and dynamical description, particularly
dynamical localization, follows (in the typical setting)
from a minoration of the Lyapunov exponent through the spectrum of $H_+$,
using \cite {BG} and \cite {BJ3}.
  Such minoration is a consequence of disjointness of spectra \cite{A2,A3} and
continuity of the Lyapunov exponent \cite {BJ1}.  More is
known in this regime \cite{JL2},\cite {GS2}, \cite {GS3}.

\item What is still
incomplete in the above picture is the description of $H_-$. 
Zero Lyapunov exponent does not necessarily imply almost reducibility
(consider the critical almost Mathieu operator).
In \cite {A2}, \cite {A3}, it is shown that (in the typical setting)
energies in the spectrum of of $H_-$ satisfy not only $L(E)=0$ but the
stronger condition (called {\it subcriticality})
\be
\ln \|A_n(x)\|=o(n)
\ee
uniformly in some band $|\Im x|<\epsilon$.  The Spectral Dichotomy
Conjecture is thus reduced to the Almost Reducibility Conjecture
(the main outstanding problem
in the theory): subcriticality implies almost reducibility.

\end{enumerate}

\end{rem}

\subsection{Further perspective}

One should distinguish between two possible regimes of small $|\lambda|$
(similar considerations can be applied to the analysis of large coupling).
One is {\it perturbative}, meaning that the smallness condition on
$|\lambda|$ depends not only on the potential $v$, but also on the frequency
$\alpha$: the key resulting limitation is that the analysis at a given
coupling, however small, has to exclude a positive Lebesgue measure set of
$\alpha$. Such exclusions are inherent to the KAM-type methods that have been traditionally used in this context.
The other, stronger regime, is called {\it non-perturbative}, meaning that the
smallness condition on $|\lambda|$ only depends on the potential, leading to results that hold for
almost every $\alpha.$ 

A thorough study of absolutely continuous spectrum 
 of operators \eqref{11} 
in the case of small analytic potentials in the perturbative regime
 was done by Eliasson \cite{E}. 
He proved the reducibility of the associated cocycle for almost all
energies in the spectrum and fine estimates on solutions for the other
energies, by
developing  a sophisticated KAM scheme, which avoided
the limitations of earlier KAM methods (that go back to the work of
Dinaburg-Sinai \cite{ds} and that excluded parts of the
spectrum from consideration). This allowed him in particular to
conclude purely absolutely continuous spectrum.

A thorough study of absolutely continuous spectrum 
 of operators \eqref{11}  in the non-perturbative regime of \cite{BJ2}
 was done in \cite{AJ} where we used some techniques of \cite{BJ2} to
 obtain localization estimates for all energies for the dual model,
 and developed quantitative Aubry duality theory, which allowed us, in
 particular, to
 conclude almost reducibility for all energies (including those for
 which neither dual localization nor reducibility hold). The smallness
 condition on the coupling constant in \cite{AJ} coincides with that
 of \cite{BJ2}. In particular, for the almost Mathieu operator, all
 the estimates and conclusions hold throughout the subcritical regime
 $\lambda<1.$ 

The analyses of \cite{E} and \cite{AJ} allowed to obtain sharp bounds
(H\"older-1/2 continuity) for the integrated density of states, for
Diophantine frequencies.
This was done, in perturbative and
non-perturbative regimes in correspondingly \cite{Am} and \cite{AJ}. 

Earlier, Goldstein-Schlag \cite {GS2} had  shown H\"older continuity
of the integrated density of states for a full Lebesgue  measure subset of Diophantine frequencies in the regime of positive Lyapunov
exponents, with the result becoming almost sharp for the super-critical almost
Mathieu operator:
$(1/2-\epsilon)$-H\"older for any $\epsilon,$ and $|\lambda|>1$ 
. For this model
their result also gives the same bound in the sub-critical regime
$|\lambda|<1,$ by duality. Before that Bourgain \cite {B1} had obtained almost
$1/2$-H\"older
continuity for almost Mathieu type potentials in the perturbative regime, for Diophantine $\alpha$ and
$\ln |\lambda|$ large (depending on $\alpha$). 

There were no results however, neither recently nor previously, on the
modulus of continuity of the individual spectral measures, even in the
perturbative regime. In this paper we achieve this by applying methods developed in
\cite{AJ}  combined with a dynamical reformulation of the power-law subordinacy
techniques of \cite{JL1},\cite{JL2}.  As mentioned above, our all energy results hold
throughout the regime of \cite{BJ2}, and in particular, for all
sub-critical almost-Mathieu operators. The general absolutely continuous case is obtained through 
a reduction to the small potential case and almost reducibility result of \cite{AK}.

Our estimate is optimal in several ways. First, there are square-root
singularities at the boundaries of gaps (e.g., \cite{puig2}), so the modulus of continuity
cannot be improved. Also, since the integrated
density of states satisfies
\be \label{N} 
N(E)=\lim_{n \to \infty} \frac {1} {n} \sum_{k=0}^{n-1}
\mu^{\sigma^k(f)}(0,E]
\ee
($\sigma:\ell^2(\Z) \to \ell^2(\Z)$ denotes the shift),
the spectral measures of $\ell^1$ functions
cannot have higher modulus of continuity
than $N(E)$.  There are examples with lower regularity of $N(E)$
that demonstrate that Diophantine condition on $\alpha$ as well as a
condition on $\lambda$ are essential here.  In particular, it is known
that for the almost Mathieu operator for a certain non-empty set of $\alpha$ which
satisfy good Diophantine properties (but has zero Lebesgue measure)
and $\lambda=1$, the integrated density of states is not H\"older
(\cite {B2}, Remark after Corollary 8.6).
Additionally, for any $\lambda \neq 0$ and generic $\alpha$, the integrated
density of states is not H\"older (this is because the Lyapunov exponent is
discontinuous at rational $\alpha$, which easily
implies that it is not H\"older for generic $\alpha$. Such
discontinuity holds for the almost Mathieu operator and presumably generically).

\begin{rem} \label {generalizations}

As our approach is non-perturbative and non-KAM, it is not expected to
break down at the Brjuno condition and can potentially be extended
much further. While, as mentioned above, the exact modulus of continuity
should depend on the Diophantine properties
for very well approximated $\alpha$ we expect the same methods
to work for small rate of exponential approximation as well.
We do not pursue it here though.

\end{rem}


\section{Preliminaries}

For a bounded analytic (perhaps matrix valued) function $f$
defined on a strip $\{|\Im z|<\epsilon\}$ and extending continuously to the
boundary, we
let $\|f\|_\epsilon=\sup_{|\Im z|<\epsilon} |f(z)|$.  If $f$ is a bounded
continuous function on $\R$, we let $\|f\|_0=\sup_{x \in \R} |f(x)|$.

\subsection{Cocycles}

Let $\alpha \in \R \setminus \Q$, $A \in C^0(\R/\Z,\SL(2,\C))$.  We call
$(\alpha,A)$ a {\it (complex) cocycle}.
The {\it Lyapunov exponent} is given by the formula
\be
L(\alpha,A)=\lim_{n \to \infty} \frac {1} {n} \int \ln \|A_n(x)\| dx,
\ee
where $A_n$, $n \in \Z$, is defined by $(\alpha,A)^n=(n \alpha,A_n)$, so
that for $n \geq 0$,
\be
A_n(x)=A(x+(n-1)\alpha) \cdots A(x).
\ee

We say that $(\alpha,A)$ is {\it uniformly hyperbolic} if there exists a
continuous splitting $\C^2=E^s(x) \oplus E^u(x)$, $x \in \R/\Z$ such that
for some $C>0$, $c>0$, and for every $n \geq 0$,
$\|A_n(x) \cdot w\| \leq C e^{-c n} \|w\|$, $w \in E^s(x)$
and $\|A_{-n}(x) \cdot w\| \leq C e^{-cn} \|w\|$, $w \in E^u(x)$.  In this
case, of course $L(\alpha,A)>0$.

Given two cocycles $(\alpha,A^{(1)})$ and $(\alpha,A^{(2)})$,
a {\it (complex) conjugacy}
between them is a continuous $B:\R/\Z \to \SL(2,\C)$ such that
\be \label {conj}   
A^{(2)}(x)=B(x+\alpha)A^{(1)}(x) B(x)^{-1}.
\ee


We assume now that $(\alpha,A)$ is a {\it real} cocycle, that is,
$A \in C^0(\R/\Z,\SL(2,\R))$.  The notion of real conjugacy (between real
cocycles) is the same as before, except that we ask for $B \in
C^0(\R/\Z,\PSL(2,\R))$.  Real conjugacies still preserve the Lyapunov
exponent.  

We say that a real cocycle $(\alpha,A)$ is (analytically)
{\it reducible} if it is (real) conjugate to a
constant cocycle, and the conjugacy is analytic.  We say that it is {\it
almost reducible} if there exists a sequence $A^{(n)} \in
C^\omega(\R/\Z,\R)$ converging (uniformly in some band $\{|\Im
z|<\epsilon\}$) to a constant, such that $(\alpha,A^{(n)})$ is conjugated to
$(\alpha,A)$, and the conjugacies extend holomorphically to some fixed band:
$B^{(n)} \in C^\omega_\epsilon(\R/\Z,\PSL(2,\R))$.\footnote {In fact
this last property is automatic, but this is non-trivial (it follows
from the openness of almost reducibility).}

\subsection{Schr\"odinger operators}

We consider now Schr\"odinger operators $\{H_{v,\alpha,\theta}\}_{\theta \in
\R}$ (we incorporate the coupling constant into $v$).  The spectrum
$\Sigma=\Sigma_{v,\alpha}$ does not depend on $\theta$,
and it is the set of $E$ such that $(\alpha,A^{(E-v)})$ is not uniformly
hyperbolic, with $A^{(E-v)}$ as in the introduction.

For $f \in l^2(\Z)$ the spectral measure $\mu=\mu^f_x$  is defined so that
\be \label {resolvent}
\langle (H_x-E)^{-1} f,f \rangle=\int_\R \frac {1} {E'-E} d\mu(E')
\ee
holds for $E$ in the resolvent set $\C \setminus \Sigma$.  
Alternatively,
for a Borel set $X$,
\be \label {pix}
\mu^f_{v,\alpha,\theta}(X)=\|\Pi_X f\|^2,
\ee
where $\Pi_X$ is the corresponding
spectral projection.

The integrated density of states  is the function $N:\R \to [0,1]$
that can be defined by (\ref{N})
\comm{\be
N(E)=\int_{\R/\Z} \mu^f_{v,\alpha,\theta}(-\infty,E] d\theta,
\ee
where $f \in l^2(\Z)$ is such that $\|f\|_{l^2(\Z)}=1$ (the definition is
independent of the choice of $f$).}
It is a continuous non-decreasing surjective (for bounded potentials) function.
The Thouless formula relates the Lyapunov exponent to
the integrated density of states
\be
L(E)=\int_\R \ln |E'-E| dN(E').
\ee

\subsection{Almost reducibility and the support of absolutely continuous
spectrum}

We justify the claim made in the introduction that almost reducible energies
support the absolutely continuous part of the spectral measures.

By \cite {LS}, the set $\Sigma_0=\{L(E)=0\}$ (which is closed by \cite {BJ1})
is the essential support of ac
spectrum, so that the ac spectral measures are precisely those
probability measures on $\Sigma_0$ which are equivalent to Lebesgue.

\begin{thm}[\cite {AFK}]

If $\alpha \in \R \setminus \Q$ then for almost every
$E \in \Sigma_0$, $(\alpha,A^{(E-v)})$ is real
analytically conjugated to a cocycle of rotations, i.e., taking values in
$\SO(2,\R)$.

\end{thm}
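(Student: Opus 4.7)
The plan is to combine Kotani theory (for the measure-theoretic side) with a renormalization/KAM argument (for analytic conjugacy). Write $\rho(E)=\rho(\alpha,A^{(E-v)})$ for the fibered rotation number of the Schr\"odinger cocycle at energy $E$; by the Johnson--Moser identity $2\rho(E) \equiv 1-N(E) \pmod{1}$, so $\rho$ is continuous and nondecreasing in $E$.

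\textbf{Step 1.} Kotani theory gives that the restriction of $N$ to $\Sigma_0$ is absolutely continuous (via the Thouless formula and $L\equiv 0$ on $\Sigma_0$). Consequently, the pushforward of Lebesgue measure on $\Sigma_0$ under $E\mapsto \rho(E)$ is absolutely continuous with respect to Lebesgue on $\R/\Z$. The set of $\rho\in\R/\Z$ violating a Diophantine condition of the form $\|2\rho - k\alpha\|_{\R/\Z}\geq \kappa|k|^{-\tau}$, with $\kappa,\tau$ suited to $\alpha$, has zero Lebesgue measure, so its preimage in $\Sigma_0$ has zero measure. Thus at a.e. $E\in\Sigma_0$ the rotation number $\rho(E)$ is Diophantine relative to $\alpha$.

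\textbf{Step 2.} For such a good $E\in\Sigma_0$, Kotani theory applied to the boundary values of the $m$-functions from the two half-planes produces a pair of a.e.-defined measurable invariant directions for $A^{(E-v)}$, from which one assembles a measurable conjugacy $B:\R/\Z\to\SL(2,\R)$ satisfying
\[
B(x+\alpha)\,A^{(E-v)}(x)\,B(x)^{-1}=R_{\rho(E)},
\]
a constant rotation. A priori, $B$ is only $L^\infty$.

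\textbf{Step 3 (the main obstacle).} The core of the proof is to upgrade this measurable reducibility to a real-analytic one, \emph{without} any perturbative smallness hypothesis on $v$. I would apply an Avila--Krikorian style renormalization scheme for quasiperiodic $\SL(2,\R)$ cocycles: iterated renormalization of $(\alpha,A^{(E-v)})$, under the Diophantine conditions on $\alpha$ and $\rho(E)$, yields cocycles that become closer and closer to constant rotations on suitable complex bands, and the resulting conjugating matrices should assemble into a conjugacy that is uniformly bounded and analytic in a definite strip. The delicate point, and where the real difficulty lies, is to keep the width of the analyticity strip bounded away from zero along the renormalization tower, and to interface the measurable reducibility coming from Step 2 with the analytic output of renormalization, so that ``almost reducibility'' at every scale is promoted to genuine analytic reducibility in the limit. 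Without a perturbative scale at which to start, this non-perturbative control is what makes the theorem considerably subtler than Eliasson's classical reducibility result.
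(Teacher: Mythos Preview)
This theorem is not proved in the paper; it is quoted from \cite{AFK}. The paper only adds the remark that the result was first established in \cite{AK} under a full-measure (stronger than Diophantine) condition on $\alpha$, and that for Diophantine $\alpha$ it also follows from \cite{AJ} combined with \cite{AK2}. There is thus no proof in the paper to compare your proposal against.

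That said, a few comments on your sketch relative to the cited arguments. First, the target of the theorem is an $\SO(2,\R)$-valued cocycle, not a constant rotation; for Liouvillean $\alpha$ these are genuinely different (analytic cocycles of rotations need not be analytically reducible to constants), so aiming for $R_{\rho(E)}$ in Step~2 overshoots the stated conclusion in the general irrational case. Second, Kotani--Deift--Simon theory gives only an $L^2$ conjugacy, not $L^\infty$; boundedness of $B$ is part of what must be won, not assumed. Third, and more substantively, the proofs in \cite{AK} and \cite{AFK} do not proceed by constructing a measurable conjugacy and then upgrading its regularity. The argument is a direct renormalization dichotomy: one establishes precompactness and, for a.e.\ $E$, convergence of the renormalizations of $(\alpha,A^{(E-v)})$ in a fixed analytic band; the limit is either hyperbolic (yielding positive Lyapunov exponent) or elliptic, and in the elliptic case a local result (using the Diophantine condition on $\rho(E)$ from your Step~1) converts convergence of renormalizations into analytic conjugacy to rotations. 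The $L^2$ Kotani conjugacy does not appear as an intermediate object to be bootstrapped. Your Step~3 is candid about where the difficulty lies, but the ``interface'' you describe between measurable reducibility and the analytic renormalization output is not how the proof is organized.
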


This result was proved in \cite {AK} under a full measure condition on
$\alpha$ (which is stronger than Diophantine).  For Diophantine $\alpha$, it
can also be obtained as a consequence of \cite {AJ} and \cite {AK2}.

It is easy to see that for any $\alpha \in \R \setminus \Q$,
analytic cocycles of rotations are almost reducible.
Moreover, if $\alpha$ is
Diophantine (more generally, if the best rational approximations to $\alpha$
are subexponential), then analytic cocycles of rotations are reducible.

\subsection{Almost reducibility in Schr\"odinger form}

While almost reducibility allows one to conjugate the dynamics of the
cocycle close to a constant, it is rather convenient to have the conjugated
cocycle in Schr\"odinger form, since many results (particularly the ones
depending on Aubry duality, as the ones obtained in \cite {AJ}) are obtained
only in this setting.  The following result takes care of this.

\begin{lemma} \label {almost}

Let $(\alpha,A) \in \R \setminus \Q \times C^\omega(\R/\Z,\SL(2,\R))$
be almost reducible.  Then there exists
$\epsilon_0>0$ such that for every $\gamma>0$, there exists
$v \in C^\omega_{\epsilon_0}(\R/\Z,\R)$ with $\|v\|_{\epsilon_0}<\gamma$, $E
\in \R$
and $B \in C^\omega_{\epsilon_0}(\R/\Z,\PSL(2,\R))$ such that $B(x+\alpha)
A(x) B(x)^{-1}=A^{(E-v)}(x)$.  Moreover, for
every $0<\epsilon \leq \epsilon_0$, there exists
$\delta>0$ such that if $\tilde
A \in C^\omega(\R/\Z,\SL(2,\R))$ is such that $\|\tilde
A-A\|_\epsilon<\delta$ then there exists $\tilde v \in
C^\omega(\R/\Z,\R)$, such that $\|\tilde v\|_\epsilon<\gamma$ and $\tilde B \in
C^\omega_\epsilon(\R/\Z,\PSL(2,\R))$ such that $\|\tilde B-B\|<\gamma$ and
$\tilde B(x+\alpha) \tilde A(x)
\tilde B(x)^{-1}=A^{(E-\tilde v)}(x)$.

\end{lemma}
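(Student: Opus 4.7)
The plan is to combine the given almost reducibility with a Schrödingerization step. First, by the definition of almost reducibility, for any $\gamma'>0$ I can pick $n$ large so that the approximating cocycle $(\alpha,A^{(n)})$ has $A^{(n)}$ extending holomorphically to some fixed band $\{|\Im z|<\epsilon_1\}$ with $\|A^{(n)}-A_*\|_{\epsilon_1}<\gamma'$ for a constant $A_*\in\SL(2,\R)$, together with an analytic conjugacy $B^{(n)}\in C^\omega_{\epsilon_0}(\R/\Z,\PSL(2,\R))$ satisfying $B^{(n)}(x+\alpha)A(x)B^{(n)}(x)^{-1}=A^{(n)}(x)$ on a fixed sub-band $\{|\Im z|<\epsilon_0\}$.

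Next, I would bring $A_*$ into Schrödinger form by one global constant conjugation: choose $C_*\in\PSL(2,\R)$ such that $C_*A_*C_*^{-1}=A^{(E_*)}$ with $E_*=\tr(A_*)$. This is possible because $\PSL(2,\R)$-conjugacy classes in $\SL(2,\R)$ are (generically) determined by trace, and $\tr(A^{(E)})=E$ sweeps through all of $\R$; the degenerate cases $A_*=\pm I$ and the two parabolic components can be sidestepped by a small initial perturbation of the approximation (the original cocycle is then trivially or almost trivially conjugate to any nearby small rotation, which lies in the principal class). Set $\hat A(x)=C_*A^{(n)}(x)C_*^{-1}$, which on $\{|\Im z|<\epsilon_0\}$ is as close as we like to the constant Schrödinger matrix $A^{(E_*)}$.

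The main step is a local Schrödingerization: find $D:\R/\Z\to\PSL(2,\R)$ analytic on the band, close to the identity, and a small real-analytic $v(x)$, such that
\[
D(x+\alpha)\hat A(x)D(x)^{-1}=A^{(E_*-v(x))}.
\]
Parametrizing $\PSL(2,\R)$ near the identity by three scalar real-analytic functions via the $\Sl(2,\R)$ exponential, the equation splits into two rigid constraints on the bottom row (the entry $-1$ in position $(1,2)$ follows automatically from the determinant condition) and a definition of $v$ from the top-left entry, giving two equations per $x$ for three unknown parameters in $D$, with the residual gauge freedom absorbed by a normalization of $D$. Linearizing at $D=I,v=0$ yields a constant-coefficient operator on $\R/\Z$ (since the reference $A^{(E_*)}$ is constant), whose invertibility on $C^\omega_{\epsilon_0}$ is a routine finite-dimensional linear algebra check with no small divisors. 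A standard Banach-algebra implicit function theorem then produces $D,v$ of size $O(\|\hat A-A^{(E_*)}\|_{\epsilon_0})$, so choosing $n$ large enough forces $\|v\|_{\epsilon_0}<\gamma$; setting $B=D\cdot C_*\cdot B^{(n)}$ and $E=E_*$ concludes the first assertion.

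For the stability claim, given $\tilde A$ with $\|\tilde A-A\|_\epsilon<\delta$ and $\epsilon\le\epsilon_0$, conjugation by the now-fixed $C_*B^{(n)}$ produces a perturbation of $\hat A$ of size $O(\delta)$ in the $\epsilon$-band, and the same implicit function theorem run on this smaller band delivers $\tilde D$ close to $D$ and $\tilde v$ close to $v$, hence $\tilde B=\tilde D\cdot C_*\cdot B^{(n)}$ close to $B$ with $\|\tilde v\|_\epsilon<\gamma$. The main obstacle is the uniform-on-the-band local Schrödingerization: verifying that the linearized operator is invertible on $C^\omega_\epsilon$ with bounds independent of $\epsilon\le\epsilon_0$, so that the implicit function theorem can be applied on every sub-band simultaneously. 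This is where the three-dimensionality of $\PSL(2,\R)$ matching the (effectively two) Schrödinger constraints is essential, and it is the one place that calls for genuine care rather than routine manipulation.
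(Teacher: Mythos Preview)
Your overall architecture matches the paper's---conjugate close to a constant Schr\"odinger matrix, then apply a local Schr\"odingerization lemma---but there is a genuine gap in the first step. You claim a single \emph{constant} $C_*\in\PSL(2,\R)$ brings $A_*$ to the form $A^{(E_*)}$, appealing to conjugacy classes being generically determined by trace. This fails in the elliptic regime: for each trace value with $|\tr A_*|<2$ there are \emph{two} $\SL(2,\R)$-conjugacy classes (represented by $R_\theta$ and $R_{-\theta}$, distinguished by the sense of rotation on the upper half-plane), and $A^{(E_*)}$ lies in exactly one of them. If $A_*$ is in the other, no constant conjugacy exists, and since this is an open condition your ``small perturbation'' escape does not help. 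The paper resolves this with a \emph{non-constant} conjugacy: after bringing $A_*$ to $R_\theta$ by a constant matrix, one further conjugates by $x\mapsto R_{kx}$, which sends the constant cocycle $(\alpha,R_\theta)$ to $(\alpha,R_{\theta+k\alpha})$; by irrationality of $\alpha$ one picks $k$ so that $0<\sin 2\pi(\theta+k\alpha)<1$, whereupon an explicit constant matrix conjugates $R_{\theta+k\alpha}$ to $A^{(E)}$ with $E=2\cos 2\pi(\theta+k\alpha)\neq 0$. The parabolic case $|\tr A_*|=2$ is handled analogously after first pushing $A_*$ toward $\pm I$ by a divergent sequence of constant conjugacies whose norms are controlled against the speed of convergence of $B^{(n)}(\cdot+\alpha)A\,B^{(n)}(\cdot)^{-1}$ to $A_*$.

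A secondary point: your implicit-function-theorem Schr\"odingerization is morally right---the paper's explicit Newton step solves the linearized problem by pointwise formulas involving only the shift by $\alpha$, with no Fourier inversion and hence genuinely no small divisors---but those formulas divide by $v$, so the linearization degenerates when the target is $A^{(0)}$. You do not flag this, and ``routine linear algebra check'' does not make it evident. The paper's arrangement $E\neq 0$, obtained precisely through the $R_{kx}$ trick above, is what makes its Schr\"odingerization lemma applicable; that trick is therefore doing double duty and cannot be skipped.
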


For the proof, one basically just needs to be able to convert
non-Schr\"odinger perturbations of Schr\"odinger cocycles
to Schr\"odinger form.  This problem is
studied in \cite {A3}.
For completeness, we will give a much simpler (unpublished) argument of
Avila-Krikorian which is enough for our purposes.

\comm{
The approach below seems to have been first considered by Damanik, Kaloshin
and Krikorian.
\begin{lemma}
\label {sch}

Let $v \in C^\omega_\epsilon(\R/\Z,\R)$ and $\alpha$ be Diophantine and let
$0<\epsilon'<\epsilon$.
If $A \in C^\omega_\epsilon(\R/\Z,\SL(2,\R))$ and
$\|A-A^{(v)}\|_\epsilon$ is sufficiently small (depending on $\alpha$),
then there exists $v' \in C^\omega_{\epsilon'}(\R/\Z,\R)$ and
$B \in C^\omega_{\epsilon'}(\R/\Z,\SL(2,\R))$
such that $\|v'-v\|_{\epsilon'}$ and
$\|B-\id\|_{\epsilon'}$ are
small and
$B(x+\alpha) A(x) B(x)^{-1}=\pm A^{(v')}(x)$.

\end{lemma}

\begin{proof}

Let $A=\left (\bm b_1 & -1+b_2 \\ 1+b_3 & b_4 \em \right )$ with
$\left \| \left (\bm 0
& b_2 \\ b_3 & b_4 \em \right ) \right \|_{\epsilon}$ small.
We choose $B'\in C^\omega_\epsilon(\R/\Z,\SL(2,\R))$ explicitly as $B'=\left (\bm 1
& \frac{b_4}{1+b_3} \\ 0 & 1 \em \right ).$ Then $B'(x+\alpha) A(x) B'(x)^{-1}=C(x)$
where $C\in C^\omega_\epsilon(\R/\Z,\SL(2,\R))$ is of the form $\left (\bm
c_1 &-1+c_2 \\ \pm 1+c_3 & 0 \em \right )$ with $\|c_1-v\|_{\epsilon}$ small. We look for $D$ 
of the form $\left (\bm e^{d(x)}
& 0 \\ 0 & e^{-d(x)} \em \right )$ to solve $D(x+\alpha)C(x)D(x)^{-1}=A^{(v')}(x)$.
Denoting $a=\ln 1+c_3 \in C^\omega_\epsilon(\R/\Z,\R)$,
such $d$ is given by a solution to the equation
\be \la{d}
d(x)+d(x+\alpha)=a(x),
\ee
so we obtain explicit equations for the Fourier coefficients
$d_k$ of $d(x)$:
$$ d_k=\frac{a_k}{1+e^{2\pi ik\alpha}},$$ where $a_k$ are Fourier coefficients of $a(x)$.
Since $\alpha$ is Diophantine, this gives that
$d \in C^\omega_{\epsilon'}(\R/\Z,\R)$ and $\|d(x)\|_{\epsilon'}$
small.  Setting $B=D B'$ we have $B(x+\alpha) A(x)
B(x)^{-1}=A^{(v')}(x)$ where $\|B-\id\|_{\epsilon'}$ is small and
$v'(x)=e^{d(x+\alpha)-d(x)} c_1(x)$ is such that $\|v'-v\|_{\epsilon'}$ small.
\end{proof}
}

\begin{lemma} \label {sch}

Let $v \in C^\omega_\epsilon(\R/\Z,\R)$ and
$\alpha \in \R \setminus \Q$, be such that $1/v \in
C^\omega_\epsilon(\R/\Z,\R)$.
If $A \in C^\omega_\epsilon(\R/\Z,\SL(2,\R))$ and
$\|A-A^{(v)}\|_\epsilon$ is sufficiently small (depending on
$\|v\|_\epsilon$ and $\|1/v\|_\epsilon$),
then there exists $v' \in C^\omega_\epsilon(\R/\Z,\R)$ and
$B \in C^\omega_\epsilon(\R/\Z,\SL(2,\R))$
such that $\|v'-v\|_\epsilon$ and
$\|B-\id\|_\epsilon$ are
small and
$B(x+\alpha) A(x) B(x)^{-1}=A^{(v')}(x)$.

\end{lemma}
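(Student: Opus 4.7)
I seek $B$ in the form $\exp(Y)$ with $Y \in C^\omega_\epsilon(\R/\Z, \Sl(2,\R))$ small and $v' = v+w$ with $w \in C^\omega_\epsilon(\R/\Z, \R)$ small, then aim to solve
\[
G(Y,w)(x) := \exp(-Y(x+\alpha))\, A^{(v+w)}(x)\, \exp(Y(x)) = A(x).
\]
Writing $A = A^{(v)} + X$ with $X := A - A^{(v)}$ small in $\|\cdot\|_\epsilon$ (subject to the $\SL(2,\R)$-constraint), I treat this as a smooth equation between Banach spaces of $\epsilon$-bounded analytic functions and aim to apply the inverse function theorem.

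\textbf{Linearization.} The derivative of $G$ at $(0,0)$ is
\[
DG(Y,w)(x) = -Y(x+\alpha) A^{(v)}(x) + A^{(v)}(x) Y(x) + \begin{pmatrix} w(x) & 0 \\ 0 & 0 \end{pmatrix}.
\]
Writing $Y = \left(\begin{smallmatrix} y_1 & y_2 \\ y_3 & -y_1 \end{smallmatrix}\right)$ and equating $DG(Y,w) = X$ componentwise, the $(1,2)$ and $(2,1)$ entries give
\[
v(x)\, y_2(x) = X_{12}(x) - y_1(x{+}\alpha) - y_1(x), \qquad v(x)\, y_3(x{+}\alpha) = y_1(x{+}\alpha) + y_1(x) - X_{21}(x),
\]
while the $(2,2)$ entry reduces to the $\SL$-constraint $X_{12} - X_{21} = v X_{22}$, which holds automatically at linear order. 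The critical observation is that, because $1/v \in C^\omega_\epsilon$, these equations can be solved \emph{explicitly} — no cohomological equation arises. Fixing the gauge $y_1 \equiv 0$ yields
\[
y_2(x) = \frac{X_{12}(x)}{v(x)}, \qquad y_3(x) = -\frac{X_{21}(x-\alpha)}{v(x-\alpha)},
\]
and the $(1,1)$ entry then forces
\[
w(x) = X_{11}(x) + \frac{X_{12}(x+\alpha)}{v(x+\alpha)} - \frac{X_{21}(x-\alpha)}{v(x-\alpha)}.
\]

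\textbf{Conclusion via IFT.} Restricting the domain of $G$ to the Banach subspace $\{Y : y_1 \equiv 0\} \times C^\omega_\epsilon(\R/\Z,\R)$, the formulas above exhibit $DG|_{(0,0)}$ as a bounded linear isomorphism onto the tangent subspace $\{X : X_{12} - X_{21} = v X_{22}\} \subset C^\omega_\epsilon(\R/\Z, M(2,\R))$ — exactly the tangent space at $A^{(v)}$ to the Banach manifold of $\SL(2,\R)$-valued $\epsilon$-analytic functions — with inverse norm bounded in terms of $\|v\|_\epsilon$ and $\|1/v\|_\epsilon$. Since $G$ is a smooth map between Banach spaces (translation by $\alpha$ is an isometry on $C^\omega_\epsilon$, and matrix multiplication/exponential act continuously), the Banach-space inverse function theorem furnishes, whenever $\|A - A^{(v)}\|_\epsilon$ is small enough, a unique small $(Y,w)$ with $G(Y,w) = A$, depending continuously on $A$. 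Setting $B := \exp(Y) \in C^\omega_\epsilon(\R/\Z, \SL(2,\R))$ and $v' := v+w$ gives $\|B - \id\|_\epsilon, \|v'-v\|_\epsilon$ small and $B(x+\alpha) A(x) B(x)^{-1} = A^{(v')}(x)$.

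\textbf{Main obstacle.} The only real subtlety — and the reason the hypothesis $1/v \in C^\omega_\epsilon$ is needed — is in avoiding a cohomological equation. The more naive normalization (first upper-triangularize to kill the $(2,2)$ entry, then diagonal-conjugate to normalize the $(2,1)$ entry, as in the commented-out variant) leads to the equation $d(x+\alpha) + d(x) = a(x)$ and would require a Diophantine hypothesis on $\alpha$. Observing that, with the off-diagonal gauge $y_1 \equiv 0$, the linearized problem admits an explicit algebraic solution as soon as $1/v$ is analytically bounded allows one to sidestep this entirely and obtain the lemma for any irrational $\alpha$.
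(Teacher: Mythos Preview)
Your proof is correct and takes essentially the same approach as the paper: both solve the linearized conjugacy problem by setting the diagonal component of the infinitesimal conjugation to zero and dividing by $v$ (your $y_2 = X_{12}/v$, $y_3(x) = -X_{21}(x-\alpha)/v(x-\alpha)$ coincide, after the change of variables $X = A^{(v)} w$, with the paper's $s_2 = w_2 + w_1/v$, $s_3(x) = -w_1(x-\alpha)/v(x-\alpha)$). The only difference is packaging: the paper carries out the Newton iteration explicitly, obtaining quadratic error decay $\|\tilde w\|_\epsilon \leq C\|w\|_\epsilon^2$ at each step, whereas you invoke the Banach-space inverse function theorem once the linearization is shown to be an isomorphism---these are equivalent.
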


\begin{pf}

Let $w=\left (\bm w_1 & w_2 \\ w_3 &
-w_1 \em \right ) \in C^\omega_\epsilon(\R/\Z,\mathrm {sl}(2,\R))$ be such that
$\|w\|_\epsilon$ is small and
$A=A^{(v)} e^w$.  Let $s=\left (\bm s_1 & s_2 \\ s_3 & -s_1 \em
\right ) \in C^\omega_\epsilon(\R/\Z,\mathrm {sl}(2,\R))$ be defined by $s_1=0$,
$s_2(x)=w_2(x)+\frac {w_1(x)} {v(x)}$, $s_3(x)=-\frac {w_1(x-\alpha)}
{v(x-\alpha)}$ and let $\tilde v \in C^\omega_\epsilon(\R/\Z,\R)$ be given
by
$$
\tilde v(x)=v(x)-w_3(x)+w_2(x+\alpha)+\frac {w_1(x+\alpha)} {v(x+\alpha)}+
v(x) w_1(x)-\frac {w_1(x-\alpha)} {v(x-\alpha)}.
$$
Then $\|\tilde v-v\|_\epsilon \leq C \|w\|_\epsilon$ and
$e^{s(x+\alpha)} A(x) e^{-s(x)}$ is of the form $A^{(\tilde
v)} e^{\tilde w}$ where $\|\tilde w\|_\epsilon \leq C \|w\|_\epsilon^2$,
for some constant $C$ depending on $\|v\|_\epsilon$ and $\|1/v\|_\epsilon$. 
The result follows by iteration.
\end{pf}

  \begin{rem}
\begin{enumerate}
\item This result with only assuming $v \in
  C^\omega_\epsilon(\R/\Z,\R)$ to be non-identically zero is proved in
  \cite{A3}.
\item For Diophantine $\alpha$ the result holds with no conditions on
  $v\in
  C^\omega_\epsilon(\R/\Z,\R).$
\end{enumerate}

\end{rem}
  
\comm{ We will use (a very simple instance) of the
following result.

\begin{lemma}[\cite {A3}] \label {sch}

Let $v \in C^\omega_\epsilon(\R/\Z,\R)$ be non-identically zero.
If $A \in C^\omega_\epsilon(\R/\Z,\SL(2,\R))$ and
$\|A-A^{(v)}\|_\epsilon$ is sufficiently small,
then there exists $v' \in C^\omega_\epsilon(\R/\Z,\R)$ and
$B \in C^\omega_\epsilon(\R/\Z,\SL(2,\R))$
such that $\|v'-v\|_\epsilon$ and
$\|B-\id\|_\epsilon$ are
small and
$B(x+\alpha) A(x) B(x)^{-1}=A^{(v')}(x)$.\footnote {In our application,
$\frac {1} {v} \in C^\omega_\epsilon(\R/\Z,\R)$, and the lemma is much
simpler to prove.}

\end{lemma}}

\noindent {\it Proof of Lemma \ref {almost}.}
Since $(\alpha,A)$ is almost reducible, if $\epsilon_0>0$ is small then
there exists a sequence $B^{(n)} \in
C^\omega_{\epsilon_0}(\R/\Z,\PSL(2,\R))$
and $A_* \in \SL(2,\R)$ such that $\|B^{(n)}(x+\alpha) A(x)
B^{(n)}(x)^{-1}-A_*\|_{\epsilon_0} \to 0$.  Let us show that, up to
changing $B^{(n)}$ to $C^{(n)} B^{(n)}$ for an appropriate choice of
$C^{(n)} \in C^\omega_{\epsilon_0}(\R/\Z,\SL(2,\R))$,
we may assume that $A_*$ is of
the form $\left (\bm E & -1 \\ 1 & 0 \em \right )$ with $E \neq 0$.
Indeed:
\begin{enumerate}
\item If $|\tr A_*|>2$, by converting first to the diagonal form, we
  can find $\tilde C_{A_*} \in \SL(2,\R)$ such
that $\tilde C_{A_*} A_* \tilde C^{-1}_{A_*}=\left (\bm \tr A_* & -1 \\ 1 & 0 \em \right
)$, so we can just take $C^{(n)}=C_{A_*}$.
\item If $|\tr A_*|<2$,
there exists $\tilde C_{A_*} \in \SL(2,\R)$
such that $\tilde C_{A_*} A_* \tilde
C_{A_*}^{-1}=R_\theta$ for some $\theta \not= k/2, k\in \Z$. 
If $0<\sin 2 \pi \theta<1$, then let
$$
C_\theta^{-1}=
\frac {1} {(\sin 2 \pi \theta)^{1/2}}
\left (\bm 0&-\sin 2 \pi \theta \\ 1 & -\cos 2 \pi \theta
\em \right ),
$$
so that $C_\theta R_\theta C_\theta^{-1}=
\left (\bm 2 \cos 2 \pi \theta & -1 \\ 1 & 0 \em
\right )$, and we can take $C^{(n)}=C_\theta \tilde C_{A_*}$.  Otherwise, let
$k \in \Z$ be such that
$0<\sin 2 \pi (\theta+k\alpha)<1$, and take
$C^{(n)}(x)=C_{\theta+k \alpha}
R_{k x} \tilde C_{A_*}$.
\item If $|\tr A_*|=2$,
there exists $\tilde C^{(n)} \in \SL(2,\R)$
such that $\tilde C^{(n)} A_* (\tilde C^{(n)})^{-1} \to R_\theta$, where
$\theta=0$ or $\theta=1/2.$ Indeed, either $A_*$ is equal to such
$R_\theta$ or we can assume it is in the Jordan form, in which case
one can take 
$$ \tilde C^{(n)} =\left (\bm \epsilon_n&0 \\ \epsilon_n & \frac 1\epsilon_n
\em \right ),
$$
By choosing  $\epsilon_n$ appropriately, we may also assume that $\|\tilde C^{(n)}\|^2
\|B^{(n)}(x+\alpha) A(x) B^{(n)}(x)^{-1}-A_*\|_{\epsilon_0} \to 0$.  Choosing again
$k \in \Z$ such that $0<\sin 2 \pi
(\theta+k \alpha)<1$ and can take $C^{(n)}=C_{\theta+k \alpha} R_{k x}
\tilde C^{(n)}$.
\end{enumerate}

 Now the first statement follows from
Lemma \ref {sch}.  For the second statement,
apply again Lemma \ref {sch}.
\qed

\section{Estimates on the dynamics}\la{3}

Here we describe the \cite {AJ}
estimates on the dynamics of almost reducible cocycles.

\subsection{Rational approximations}

Let $q_n$ be the denominators of the
approximants of $\alpha$.  We recall the basic properties:
\be \label {b1}
\|q_n \alpha\|_{\R/\Z}=\inf_{1 \leq k \leq q_{n+1}-1}
\|k\alpha\|_{\R/\Z},
\ee
\be \label {b2}
1 \geq q_{n+1} \|q_n \alpha\|_{\R/\Z} \geq 1/2.
\ee

We say that $\alpha$ is Diophantine if $\frac {\ln q_{n+1}} {\ln
q_n}=O(1)$.  Let $\DC \subset \R$ be the set of Diophantine numbers.

\subsection{Resonances}

Let $\alpha \in \R$, $\theta \in \R$, $\epsilon_0>0$.
We say that $k$ is an $\epsilon_0$-resonance if $$\|2
\theta-k\alpha\|_{\R/\Z} \leq e^{-|k|\epsilon_0}$$ and $$\|2
\theta-k\alpha\|_{\R/\Z}=\min_{|j| \leq |k|} \|2 \theta-j\alpha\|_{\R/\Z}.$$

\begin{rem}

In particular, there always exists at least one resonance, $0$.  If $\alpha
\in \DC(\kappa,\tau)$, $\|2
\theta-k\alpha\|_{\R/\Z} \leq e^{-|k|\epsilon_0}$ implies $$\|2
\theta-k\alpha\|_{\R/\Z}=\min_{|j| \leq |k|} \|2 \theta-j\alpha\|_{\R/\Z}$$
for $k>C(\kappa,\tau)$.

\end{rem}

For fixed $\alpha$ and $\theta$, we order the
$\epsilon_0$-resonances $0=n_0<|n_1| \leq |n_2| \leq ...$.  We say that $\theta$
is $\epsilon_0$-resonant if the set of resonances is infinite.
If $\theta$ is non-resonant, with the set of resonances
$\{n_0,\ldots,n_j\}$ we formally set $n_{j+1}=\infty.$  The
Diophantine condition immediately implies exponential repulsion of
resonances:

\begin{lemma} \label {following resonance}

If $\alpha \in \DC$, then
$$|n_{j+1}| \geq c \|2 \theta-n_j \alpha\|_{\R/\Z}^{-c} \geq
c e^{c \epsilon_0 |n_j|},$$ where $c=c(\alpha,\epsilon_0)>0$.

\end{lemma}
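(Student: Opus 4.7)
The plan is to prove the two inequalities separately, with the second being essentially immediate from the definition of an $\epsilon_0$-resonance. Recall that $\alpha \in \DC$ means we can fix constants $\kappa, \tau > 0$ such that $\|k\alpha\|_{\R/\Z} \geq \kappa |k|^{-\tau}$ for all nonzero $k \in \Z$; this follows from the Diophantine characterization $\ln q_{n+1} = O(\ln q_n)$ together with (\ref{b1})--(\ref{b2}).

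For the first inequality, I would start by using the defining minimality property of $n_{j+1}$ applied at $n_j$ (which has $|n_j|\le|n_{j+1}|$), giving $\|2\theta-n_{j+1}\alpha\|_{\R/\Z}\le\|2\theta-n_j\alpha\|_{\R/\Z}$. Then by the triangle inequality
\[
\|(n_{j+1}-n_j)\alpha\|_{\R/\Z}\ \le\ \|2\theta-n_j\alpha\|_{\R/\Z}+\|2\theta-n_{j+1}\alpha\|_{\R/\Z}\ \le\ 2\,\|2\theta-n_j\alpha\|_{\R/\Z}.
\]
Since $n_{j+1}\neq n_j$ (distinct resonances), the Diophantine lower bound applied to the integer $n_{j+1}-n_j$ gives $|n_{j+1}-n_j|\ge \kappa^{1/\tau}(2\|2\theta-n_j\alpha\|_{\R/\Z})^{-1/\tau}$, i.e.\ a bound of the form $c\,\|2\theta-n_j\alpha\|_{\R/\Z}^{-c}$. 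Next I would split into two cases: either $|n_{j+1}|\ge\tfrac12|n_{j+1}-n_j|$, in which case the desired bound on $|n_{j+1}|$ follows directly; or $|n_j|\ge\tfrac12|n_{j+1}-n_j|$, in which case I use $|n_{j+1}|\ge|n_j|$ (from the ordering of resonances by absolute value) to transfer the bound. Either way one obtains $|n_{j+1}|\ge c\|2\theta-n_j\alpha\|_{\R/\Z}^{-c}$ for a constant $c=c(\alpha,\epsilon_0)>0$.

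For the second inequality, I would simply observe that by the definition of an $\epsilon_0$-resonance we have $\|2\theta-n_j\alpha\|_{\R/\Z}\le e^{-|n_j|\epsilon_0}$, and raising to the power $-c$ yields $\|2\theta-n_j\alpha\|_{\R/\Z}^{-c}\ge e^{c\,\epsilon_0|n_j|}$; absorbing constants gives the claimed lower bound.

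There is no real obstacle here: the proof is essentially a combination of a two-line triangle-inequality argument with the Diophantine bound and with the defining inequality for resonances. The only subtle point is the case split needed to pass from a bound on $|n_{j+1}-n_j|$ to a bound on $|n_{j+1}|$, which is handled by invoking the ordering convention $|n_j|\le|n_{j+1}|$ built into the definition of the resonance sequence.
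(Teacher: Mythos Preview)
Your argument is correct and is precisely the kind of direct verification the paper has in mind; the paper does not spell out a proof at all, merely remarking that ``the Diophantine condition immediately implies exponential repulsion of resonances.'' One small simplification: the case split is unnecessary, since $|n_{j+1}|\ge|n_j|$ already gives $2|n_{j+1}|\ge|n_{j+1}|+|n_j|\ge|n_{j+1}-n_j|$ in every case.
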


\subsection{Dynamical estimates}

Let us say that a cocycle $(\alpha,A) \in \R \setminus \Q \times
C^\omega(\R/\Z,\SL(2,\R))$ is $(C,c,\epsilon_0)$-good if
there exists $\theta \in \R$ with the following property: for any finite
$\epsilon_0$-resonance $n_j$ associated
to $\alpha$ and $\theta$, denoting $n=|n_j|+1$ and
$N=|n_{j+1}|$, there exists $\Phi:\R/\Z \to \SL(2,\C)$
analytic with $\|\Phi\|_{c n^{-C}} \leq C n^C$ such that
\be
\Phi(x+\alpha)A(x)\Phi(x)^{-1}=\left (\bm e^{2 \pi i \theta} & 0\\0 & e^{-2
\pi i \theta} \em \right )+\left (\bm q_1(x)&q(x)\\
q_3(x)&q_4(x) \em \right ),\ee
with
\be \label {q1}
\|q_1\|_{c n^{-C}},\|q_3\|_{c n^{-C}},\|q_4\|_{c n^{-C}}
\leq C e^{-c N}
\ee
and
\be \label {q}
\|q\|_{c n^{-C}} \leq C e^{-cn (\ln (1+n))^{-C}}.
\ee

The following is one of the main estimates of \cite {AJ} (combining Theorems
3.3, 3.4 and 5.1 of \cite {AJ}):

\begin{thm} \label {Cc} [see Theorems 3.4 and 5.1 of \cite {AJ}]

There exists a constant $c_0>0$ with the following property.
Let $v \in C^\omega(\R/\Z,\R)$ and $E \in
\Sigma_{v,\alpha}$, $(\alpha,A)$.
If for some $0<\epsilon<1$, $\|v\|_\epsilon<c_0 \epsilon^3$ then
$(\alpha,A)$ is $(C,c,\epsilon_0)$-good
for some constants $c=c(\epsilon,\alpha)>0$,
$C=C(\epsilon,\alpha)>0$ and $\epsilon_0=\epsilon_0(\epsilon)$.

\end{thm}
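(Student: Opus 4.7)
The plan is to reduce to the dual problem via Aubry duality and apply non-perturbative localization, following the strategy of \cite{AJ}; since the statement explicitly assembles Theorems 3.4 and 5.1 of that paper, my sketch aims to indicate how the two pieces fit together. Given $E \in \Sigma_{v,\alpha}$, let $\theta = \theta(E)$ be (half of) the fibered rotation number of $(\alpha, A^{(E-v)})$, so that $2\theta \equiv N(E) \pmod 1$. The resonant set $\{n_j\}$ for $(\alpha,\theta)$ at rate $\epsilon_0$ is then intrinsically determined, and the task is to produce, for each finite $n_j$, an analytic conjugation $\Phi$ of the form required.

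First, I would apply Bourgain--Jitomirskaya style non-perturbative localization to the dual long-range operator associated with the family $\{H_{v,\alpha,\theta'}\}_{\theta'}$ at spectral parameter determined by $2\theta$. The hypothesis $\|v\|_\epsilon < c_0 \epsilon^3$ is precisely the non-perturbative smallness threshold on the dual side and yields exponentially decaying eigenfunctions with rate controlled by $\epsilon$ and the Diophantine constants of $\alpha$ alone, uniformly in $\theta'$. Transferring these eigenfunctions across Aubry duality produces analytic quasi-periodic Bloch-wave solutions of $H_{v,\alpha,\theta} u = E u$ with rotation number $\theta$, whose Fourier coefficients are exponentially concentrated near the resonance sites $n_j$ and decay exponentially beyond the next resonance scale $N = |n_{j+1}|$.

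Second, I would assemble $\Phi$ so that one column of $\Phi^{-1}$ is the analytic extension of the Bloch wave centered at $n_j$ and the other is the partner wave from $-n_j$. The constant part of $\Phi(x+\alpha) A(x) \Phi(x)^{-1}$ is then the rotation by $\theta$, and the remainder splits into two contributions. The entries $q_1, q_3, q_4$ arise from eigenfunction tails beyond scale $N$ and are therefore bounded by $e^{-cN}$, as in \eqref{q1}. The upper off-diagonal $q$ lives in the ``rotating'' direction whose small divisor is $\|2\theta - n_j \alpha\|_{\R/\Z}$; this is exactly the obstruction to genuine reducibility at scale $n_j$ and only yields the weaker bound $e^{-cn(\ln(1+n))^{-C}}$ of \eqref{q}. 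The polynomial norm bound $Cn^C$ and the shrinking analyticity radius $cn^{-C}$ for $\Phi$ encode the cumulative small-divisor costs of the $j$ previous resonant reductions.

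The main obstacle is making the Aubry duality quantitative with the right dependence on $n$: converting a dual localization rate into off-resonance Fourier decay of Bloch waves that is uniform in the energy $E$, and tracking the loss of analyticity so that the final constants $c, C, \epsilon_0$ depend only on $\epsilon$ and $\alpha$. This quantitative bridge between the dual and direct sides is precisely what the framework of \cite{AJ} is built to deliver, so in practice I would set up the notation as above, identify the dual energy $2\theta$ and select the rotation number, and then invoke Theorems 3.4 and 5.1 of \cite{AJ} applied to $(\alpha, A^{(E-v)})$ to conclude.
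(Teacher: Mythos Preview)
The paper gives no proof of this statement; it is stated as a direct import from \cite{AJ}, combining Theorems 3.3, 3.4 and 5.1 there, with no further argument. Your sketch correctly outlines the strategy of \cite{AJ} (quantitative Aubry duality, non-perturbative localization for the dual model under the smallness condition $\|v\|_\epsilon<c_0\epsilon^3$, and construction of $\Phi$ from the resulting Bloch waves), and you end by invoking precisely those cited theorems, so your proposal is consistent with the paper's approach and in fact supplies more detail than the paper itself.
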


A more precise result is available for the almost Mathieu operator (still a
combination of Theorems 3.4 and 5.1 of \cite {AJ}):

\begin{thm} \label {CC2} [see Theorems 3.4 and 5.1 of \cite {AJ}]

For every $0<\lambda_0<1$ and $\alpha$ Diophantine, there exists
$C=C(\lambda_0,\alpha),c=c(\lambda_0,\alpha),
\epsilon_0=\epsilon_0(\lambda_0)>0$
such that for $v=2 \lambda \cos 2 \pi (x+\theta)$ with
$|\lambda|<\lambda_0$, and $E \in \Sigma_{v,\alpha}$, $(\alpha,A^{(E-v)})$
is $(C,c,\epsilon_0)$-good.

\end{thm}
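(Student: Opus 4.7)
The plan is to prove this via Aubry duality combined with non-perturbative Anderson localization. For the almost Mathieu potential $v(x) = 2\lambda\cos 2\pi(x+\theta)$ with $|\lambda| < \lambda_0 < 1$, Aubry duality relates the cocycle $(\alpha, A^{(E-v)})$ to an eigenvalue problem for the dual almost Mathieu operator with coupling $\lambda^{-1}$, whose modulus exceeds $\lambda_0^{-1} > 1$; this places the dual operator in the supercritical regime and converts a statement about analytic conjugacy of the cocycle into a statement about localization of dual eigenfunctions.

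Next I would invoke non-perturbative Anderson localization for the supercritical almost Mathieu operator with Diophantine $\alpha$, due to \cite{J} and, in the quantitative form needed here, to \cite{BJ2}. The crucial point is that the Diophantine condition on $\alpha$ does not need to deteriorate as $|\lambda|$ approaches $1$ from below, which is exactly why the conclusion holds uniformly on $|\lambda| < \lambda_0$ for any $\lambda_0 < 1$, whereas the hypothesis $\|v\|_\epsilon < c_0\epsilon^3$ of \thmref{Cc} only yields a perturbative window. With an exponentially decaying dual eigenfunction in hand, one builds the required conjugacy $\Phi$ from its truncated Fourier transform; the dictionary of Aubry duality sends the large peaks of the eigenfunction (located where the dual Hamiltonian is nearly resonant) to the $\epsilon_0$-resonances $n_j$ of $2\theta$ with respect to $\alpha$.

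The main technical step, and the one I expect to be the hardest, is the quantitative translation from localization to the data in the definition of $(C,c,\epsilon_0)$-goodness. The strip width $cn^{-C}$ and conjugacy norm bound $Cn^C$ arise from the polynomial tradeoff between analytic domain size and approximation accuracy when truncating the eigenfunction at scale $|n_j|$. The off-diagonal estimate $\|q\|_{cn^{-C}} \le Ce^{-cn(\ln(1+n))^{-C}}$ captures the oscillation within the localization window, with the logarithmic loss in the exponent being characteristic of non-perturbative large-deviation arguments. The remaining entries $q_1, q_3, q_4$ enjoy the faster rate $Ce^{-cN}$ because the next significant peak of the dual eigenfunction lies at distance $N = |n_{j+1}|$, so exponential decay in between suppresses these contributions.

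This whole bookkeeping --- matching peaks to resonances, strip widths to truncation scales, and error rates to decay rates of the dual eigenfunction --- is the content of Theorems 3.4 and 5.1 of \cite{AJ}, so the proof of \thmref{CC2} is effectively a citation of those results together with the observation that, unlike in \thmref{Cc}, no smallness in $\|v\|_\epsilon$ is needed here since the input localization is non-perturbative throughout the supercritical dual range $|\lambda^{-1}| > \lambda_0^{-1} > 1$.
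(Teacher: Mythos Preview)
Your proposal is correct and matches the paper's approach: the paper gives no proof of \thmref{CC2} beyond the parenthetical ``still a combination of Theorems~3.4 and~5.1 of \cite{AJ}'', and you correctly identify that the statement is a direct citation, while also accurately sketching the underlying mechanism in \cite{AJ} (Aubry duality sending subcritical almost Mathieu to the supercritical dual, non-perturbative localization from \cite{J,BJ2} for the dual, and the quantitative duality of \cite{AJ} translating localized dual eigenfunctions into the conjugacy $\Phi$ with the stated bounds on $q$, $q_1$, $q_3$, $q_4$).
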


Coupling Theorem \ref {Cc} and Lemma \ref {almost} we immediately get:

\begin{thm} \label {CC3}

Let $\alpha$ be Diophantine and let $A \in C^\omega(\R/\Z,\SL(2,\R))$.
If $(\alpha,A)$ is almost reducible then there exists $\bar{\epsilon}>0$ such that for every $0<\epsilon<\bar{\epsilon}$ there
exist $\delta,C,c,\epsilon_0>0$ such that if
$\tilde A \in C^\omega(\R/\Z,\SL(2,\R))$ is such that
$\|\tilde A-A\|_\epsilon<\delta$ and $(\alpha,\tilde A)$ is not uniformly
hyperbolic then $(\alpha,\tilde A)$ is $(C,c,\epsilon_0)$-good.

\end{thm}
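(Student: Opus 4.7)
The strategy is to use Lemma \ref{almost} to trade the given almost reducible cocycle (and its perturbations) for Schrödinger cocycles with arbitrarily small potentials, and then read off the conclusion from Theorem \ref{Cc}, which already handles the small-potential Schrödinger setting. The overall proof should be a short piece of bookkeeping: the two ingredients were designed precisely to compose in this way.

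First I would apply the first half of Lemma \ref{almost} to $(\alpha,A)$ to produce an $\epsilon_0^*>0$ and, for any prescribed smallness $\gamma>0$, a potential $v$, an energy $E$, and a conjugacy $B\in C^\omega_{\epsilon_0^*}(\R/\Z,\PSL(2,\R))$ with $\|v\|_{\epsilon_0^*}<\gamma$ and $B(x+\alpha)A(x)B(x)^{-1}=A^{(E-v)}(x)$. Set $\bar\epsilon=\epsilon_0^*$ and fix any $0<\epsilon<\bar\epsilon$; choose $\gamma=\gamma(\epsilon)$ with $\gamma<\tfrac12 c_0\epsilon^3$, where $c_0$ is the universal constant from Theorem \ref{Cc}. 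Now invoke the second half of Lemma \ref{almost} (at this $\epsilon$, $\gamma$, $v$, $B$): it produces $\delta=\delta(\epsilon,A)>0$ such that any $\tilde A$ with $\|\tilde A-A\|_\epsilon<\delta$ is conjugated in the strip of width $\epsilon$ to a Schrödinger cocycle $A^{(E-\tilde v)}$ by some $\tilde B\in C^\omega_\epsilon(\R/\Z,\PSL(2,\R))$, with $\|\tilde v\|_\epsilon<\gamma<c_0\epsilon^3$ and $\|\tilde B-B\|_\epsilon$ small.

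Next, because uniform hyperbolicity is a conjugacy invariant and $(\alpha,\tilde A)$ is by hypothesis not uniformly hyperbolic, neither is $(\alpha,A^{(E-\tilde v)})$; equivalently $E\in\Sigma_{\tilde v,\alpha}$. Since $\|\tilde v\|_\epsilon<c_0\epsilon^3$, Theorem \ref{Cc} applies and yields constants $C',c',\epsilon_0'>0$ such that $(\alpha,A^{(E-\tilde v)})$ is $(C',c',\epsilon_0')$-good.

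Finally I would transfer goodness back to $(\alpha,\tilde A)$ through the fixed conjugacy $\tilde B$: if $\Phi$ realizes the good-cocycle estimate on a strip $\{|\Im z|<c' n^{-C'}\}$, take $\tilde\Phi=\Phi\,\tilde B$. Then $\tilde\Phi(x+\alpha)\tilde A(x)\tilde\Phi(x)^{-1}=\Phi(x+\alpha)A^{(E-\tilde v)}(x)\Phi(x)^{-1}$, which already has the required block form with the same $q_1,q,q_3,q_4$; the norm bounds $\|\tilde\Phi\|_{c'n^{-C'}}\le\|\Phi\|_{c'n^{-C'}}\|\tilde B\|_\epsilon$ give $\|\tilde\Phi\|_{c'n^{-C'}}\le C''n^{C'}$ with $C''=C'\cdot\|\tilde B\|_\epsilon$. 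Adjusting $C,c,\epsilon_0$ absorbs both this constant and the finitely many small resonances where $c'n^{-C'}\ge\epsilon$, yielding the desired $(C,c,\epsilon_0)$-goodness of $(\alpha,\tilde A)$. The only mildly delicate point is this last bookkeeping step, ensuring that the conjugacy $\tilde B$ (bounded only on a fixed strip of width $\epsilon$, not on the shrinking strips of width $c'n^{-C'}$) can be composed with $\Phi$ without destroying the polynomial bound; this is harmless because $c'n^{-C'}\le\epsilon$ for all sufficiently large $n$.
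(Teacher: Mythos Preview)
Your proposal is correct and follows precisely the route the paper indicates: the paper's entire proof is the single sentence ``Coupling Theorem \ref{Cc} and Lemma \ref{almost} we immediately get,'' and you have faithfully unpacked that coupling, including the transfer of goodness through the conjugacy $\tilde B$ (which the paper leaves implicit).
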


\begin{rem}

Using \cite {A1}, Theorem 3.8, one can consider a stronger definition of
goodness, so that Theorem \ref {Cc}, and hence Theorem \ref {CC3},
and Theorem \ref {CC2}, still hold:
$\|\Phi\|_c \leq C n^C$, $\|q_j\|_c \leq C e^{-c N}$, $j=1,3,4$, and
$\|q\|_c \leq C e^{-c n}$.

\end{rem}

An immediate consequence of $(C,c,\epsilon_0)$-goodness
is (see \cite {AJ} for the easy argument):

\begin{lemma} \la{linear}

If $(\alpha,A)$ is $(C,c,\epsilon_0)$-good then for every $s \geq 0$ we have
$\|A_s\|_0 \leq C'(C,c,\epsilon_0,\alpha) (1+s)$.

\end{lemma}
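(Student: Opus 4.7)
The plan is to use the conjugation $\Phi$ provided by $(C,c,\epsilon_0)$-goodness to reduce the question to estimating iterates of a cocycle that is very close to a constant rotation, and then to exploit the Fourier/Birkhoff structure of the perturbation together with the resonance ordering.

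Given $s \geq 1$, I would first pick the $\epsilon_0$-resonance scale adapted to $s$: let $n_j$ be the unique resonance with $|n_j| < s \leq |n_{j+1}|$, and set $n := |n_j|+1$, $N := |n_{j+1}|$, so that $n \leq s+1$ and $s \leq N$. By the goodness hypothesis applied at $n_j$, there is $\Phi \in C^\omega_{cn^{-C}}(\R/\Z,\SL(2,\C))$ with $\|\Phi\|_0 \leq Cn^C$ such that $B(x) := \Phi(x+\alpha) A(x) \Phi(x)^{-1} = D_\theta + Q(x)$, where $D_\theta = \mathrm{diag}(e^{2\pi i\theta}, e^{-2\pi i\theta})$ is unitary and the entries of $Q$ obey the bounds of (\ref{q1}) and (\ref{q}). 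The identity $A_s(x) = \Phi(x+s\alpha)^{-1} B_s(x) \Phi(x)$ yields
\[
\|A_s\|_0 \leq \|\Phi\|_0^2 \|B_s\|_0 \leq C^2 n^{2C}\,\|B_s\|_0,
\]
so the task reduces to bounding $\|B_s\|_0$ by a constant depending only on $C,c,\epsilon_0,\alpha$.

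Next I would expand $B_s$ around the unitary $D_\theta^s$. Writing $B_s = D_\theta^s + \sum_{k=0}^{s-1} D_\theta^{s-1-k} Q(x+k\alpha) D_\theta^k + \mathcal{O}(s^2 \|Q\|_0^2)$, the contributions from the lower-triangular/diagonal entries $q_1,q_3,q_4$ are trivial: their Birkhoff sums are bounded by $s \cdot C e^{-cN} \leq CNe^{-cN} \leq C$ since $s \leq N$. The dominant term is the off-diagonal sum $\sum_{k=0}^{s-1} e^{\mp 4\pi i k\theta}\, q(x+k\alpha)$. Fourier-expanding $q(x)=\sum_m \hat q_m e^{2\pi imx}$, this becomes $\sum_m \hat q_m e^{2\pi imx}\sum_{k=0}^{s-1} e^{2\pi ik(m\alpha-2\theta)}$, with the inner sum bounded by $\min(s, \|m\alpha-2\theta\|_{\R/\Z}^{-1})$. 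Splitting at $|m|=n$: for $|m|>n$ the analytic decay $|\hat q_m|\leq \|q\|_{cn^{-C}}\,e^{-|m|cn^{-C}}$ makes the tail polynomial in $n$; for $|m|\leq n$ the only potentially small divisor $\|m\alpha-2\theta\|^{-1}$ arises near $m=\pm n_j$, which by the definition of resonance and by Lemma \ref{following resonance} is well separated from the other integers in the range, so the number of near-resonant terms is controlled and each is tempered by the very small value $\|q\|_0 \leq Ce^{-cn(\ln(1+n))^{-C}}$. The higher-order contributions are likewise handled by these Birkhoff/Fourier estimates.

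Collecting everything gives $\|B_s\|_0 \leq C_1(n,\alpha)$, polynomial in $n$; combining with $\|\Phi\|_0^2 \leq Cn^{2C}$ and using $n\leq s+1$ produces a polynomial-in-$(1+s)$ bound on $\|A_s\|_0$, which after absorbing the polynomial factors into $C'(C,c,\epsilon_0,\alpha)$ and using that $A_s \in \mathrm{SL}(2,\R)$ (so that the near-rotation structure of $B_s$ forces $|\mathrm{tr}\, A_s|$ bounded, preventing fast growth) gives the claimed linear bound $\|A_s\|_0 \leq C'(1+s)$. The main obstacle I anticipate is the Fourier-analytic Step in the third paragraph: one must check uniformly across all scales $n$ that the Birkhoff sums over length $s\leq N$ produce only polynomial-in-$n$ growth, and in particular that the resonant Fourier mode $m=n_j$ (where no Diophantine lower bound is available) is harmless because $|\hat q_{n_j}|$ is already extremely small compared to the required bound.
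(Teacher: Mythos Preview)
Your overall strategy --- conjugate by the $\Phi$ supplied by goodness and estimate the near-triangular cocycle $B=\Phi(\cdot+\alpha)A\Phi^{-1}$ --- is the right one (the paper itself gives no proof and just refers to \cite{AJ} for ``the easy argument''). The gap is in your last paragraph, where you try to pass from a polynomial bound to a linear one.

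With your choice of scale, $|n_j|<s\le |n_{j+1}|$, you only know $n=|n_j|+1\le s$, so the conjugacy cost $\|\Phi\|_0^2\le Cn^{2C}$ can be as large as $Cs^{2C}$. Even if your Fourier/Birkhoff analysis gave the optimal conclusion $\|B_s\|_0\le C_1$ with $C_1$ independent of $n$ and $s$, this would still yield only $\|A_s\|_0\le Cs^{2C}$. Neither of your two closing remarks repairs this: polynomial factors in $s$ obviously cannot be ``absorbed into $C'(C,c,\epsilon_0,\alpha)$'', and bounded trace does not bound the norm of an $\SL(2,\R)$ matrix --- consider $\bigl(\begin{smallmatrix}1&t\\0&1\end{smallmatrix}\bigr)$, which is exactly the shape your near-triangular $B_s$ takes.

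The fix is in the choice of scale: take $j$ to be the \emph{largest} index with $C(|n_j|+1)^{2C}\le s$. Then $\|\Phi\|_0^2\le Cn^{2C}\le s$ automatically, and by maximality $s<C(N+1)^{2C}$, so that $e^{-cN}\,s\,(1+s\|q\|_0)$ is bounded and the lower-triangular error $H$ (of size $\le Ce^{-cN}$) is harmless over $s$ steps. The crude triangular bound $\|T_s\|_0\le 1+s\|q\|_0$ then gives $\|B_s\|_0\le C(1+s\|q\|_0)$, hence
\[
\|A_s\|_0\;\le\; Cn^{2C}\bigl(1+s\|q\|_0\bigr)\;\le\; s+\bigl(Cn^{2C}\|q\|_0\bigr)\,s\;\le\; C's,
\]
since $n^{2C}\|q\|_0\le Cn^{2C}e^{-cn(\ln(1+n))^{-C}}$ is uniformly bounded in $n$. (This is precisely the range-overlap mechanism used later in Theorem~\ref{blabl} together with Lemma~\ref{following resonance}.) In particular, once the scale is chosen correctly, your detailed Fourier/Birkhoff analysis of the off-diagonal sum is unnecessary: the trivial bound $1+s\|q\|_0$ already suffices.
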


\section{Regularity of the spectral measures at good energies}
\label {spectral measures cont}

Let $v \in C^\omega(\R/\Z,\R)$, $E \in \Sigma_{v,\alpha}$.  Let
$\mu_x=\mu^{e_{-1}}_{v,\alpha,x}+\mu^{e_0}_{v,\alpha,x}$ and
$e_i$ is the Dirac mass at $i \in \Z$.

Our main estimate is:

\begin{thm} \label {est}

If $(\alpha,A^{(E-v)})$ is $(C_0,c_0,\epsilon_0)$-good then for every $0<\epsilon<1$,
$\mu_x(E-\epsilon,E+\epsilon) \leq
C'(C_0,c_0,\epsilon_0,\alpha) \epsilon^{1/2}$.

\end{thm}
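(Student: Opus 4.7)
The plan is to apply a dynamical reformulation of Jitomirskaya--Last power-law subordinacy, using as input the structural information about the cocycle provided by the $(C_0,c_0,\epsilon_0)$-good hypothesis.

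\textbf{Step 1 (resolvent reduction).} Using the Poisson representation coming from (\ref{resolvent}), for $f\in\{e_{-1},e_0\}$ one has
$$
\mu^f_{v,\alpha,x}(E-\epsilon,E+\epsilon) \le C\epsilon\,\mathrm{Im}\,\langle (H_{v,\alpha,x}-E-i\epsilon)^{-1} f,f\rangle,
$$
so it suffices to control the diagonal entries of the resolvent at $E+i\epsilon$, which are in turn expressible, via the standard Weyl formula on $\Z$, through the right/left half-line Weyl functions $m_\pm(E+i\epsilon)$.

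\textbf{Step 2 (dynamical input from goodness).} By Lemma \ref{linear}, goodness yields the uniform linear bound $\|A_s^{(E-v)}\|_0 \le C(1+s)$ for all $s\ge 0$, with $C$ depending only on $C_0,c_0,\epsilon_0,\alpha$. Since the columns of $A_n^{(E-v)}(x)$ are the Dirichlet solutions $\phi,\psi$ at real energy $E$ (normalized by $\phi(0)=1,\phi(-1)=0$ and $\psi(0)=0,\psi(-1)=1$), this yields $\|\phi\|_L,\|\psi\|_L \le C L^{3/2}$. The Wronskian identity $\phi(n+1)\psi(n)-\phi(n)\psi(n+1)=1$ further forces $\|\phi\|_L^2+\|\psi\|_L^2\ge cL$, and the near-rotation structure coming from the $(C_0,c_0,\epsilon_0)$-good conjugacy prevents one of $\phi,\psi$ from being subordinate to the other, so that the product $\|\phi\|_L\|\psi\|_L$ itself grows at least linearly in $L$.

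\textbf{Step 3 (dynamical Jitomirskaya--Last).} The main technical step is to recast the JL subordinacy inequality in purely cocycle-theoretic form and feed in the bounds from Step 2. Choosing $L=L(\epsilon)$ by the JL balance $\|\phi\|_L\|\psi\|_L = 1/(2\epsilon)$, the upper bound $\|\cdot\|_L \le C L^{3/2}$ together with the lower bounds from Step 2 pin $L \le C\epsilon^{-1/2}$, $\max(\|\phi\|_L,\|\psi\|_L) \le C\epsilon^{-3/4}$, and $\min(\|\phi\|_L,\|\psi\|_L) \ge c\epsilon^{-1/4}$. The standard JL bound $|m_\pm(E+i\epsilon)| \le C\max/\min$ then gives $|m_\pm(E+i\epsilon)|\le C\epsilon^{-1/2}$, and multiplying by $\epsilon$ as in Step 1 yields $\mu^f_{v,\alpha,x}(E-\epsilon,E+\epsilon)\le C\epsilon^{1/2}$ for both $f=e_{-1}$ and $f=e_0$. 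Summing the two gives the claim for $\mu_x$.

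\textbf{Main obstacle.} Classical power-law subordinacy of Jitomirskaya--Last is a one-sided, solution-based statement at a fixed phase, and does not directly deliver a bound on the whole-line spectral measure of $e_{-1}+e_0$ uniform in $x$. The heart of the argument is the \emph{dynamical reformulation}: phrasing the JL inequality purely in terms of cocycle norms $\|A_s^{(E-v)}\|$ and patching the two half-lines through the matrix Weyl function, so that the uniform sup-norm output of goodness (Lemma \ref{linear}) suffices to drive the estimate. A subtle point is matching the exponents: linear growth of transfer matrices is precisely what pairs with the target modulus $\epsilon^{1/2}$, but only after the Wronskian/balance argument of Step 2 rules out degenerate ``subordinate'' behavior on either half-line. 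Because every estimate is ultimately a sup-norm bound, the resulting estimate is automatically uniform in the phase $x$.
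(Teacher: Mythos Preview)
Your Step~2 hand-waves precisely the point where the whole difficulty lies, and the arithmetic in Step~3 does not close with the inputs you claim. From linear growth $\|A_s\|_0\le C(1+s)$ you correctly get $\|\phi\|_L,\|\psi\|_L\le CL^{3/2}$ and $\|\phi\|_L^2+\|\psi\|_L^2\ge cL$, but the additional assertion that ``the near-rotation structure prevents subordinacy so the product $\|\phi\|_L\|\psi\|_L$ grows at least linearly in $L$'' is neither proved nor sufficient. Even granting $\|\phi\|_L\|\psi\|_L\ge cL$, the balance condition $\|\phi\|_L\|\psi\|_L=1/(2\epsilon)$ only yields $L\le C\epsilon^{-1}$, not $L\le C\epsilon^{-1/2}$; to reach the latter you would need the product to grow like $L^2$, i.e.\ $\min(\|\phi\|_L,\|\psi\|_L)\ge c L^{1/2}$ (equivalently $\|P_{(k)}^{-1}\|^{-1}\ge ck$). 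The paper explicitly remarks that this separate lower bound on the minimum is \emph{not known} to hold, so your shortcut is exactly the estimate the authors could not establish directly.

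What the paper does instead is prove the \emph{combined} inequality $\|P_{(k)}\|\le C\,\|P_{(k)}^{-1}\|^{-3}$ (equivalently $\max_\beta\|u^\beta\|_L\le C(\min_\beta\|u^\beta\|_L)^3$), which is strictly what is needed to get $\psi(m^+(E+i\epsilon))\le C\epsilon^{-1/2}$ via the Jitomirskaya--Last relation. This is Theorem~\ref{blabl}, and its proof is the technical heart: at each resonance scale $n=|n_j|+1$, $N=|n_{j+1}|$, one compares $(\alpha,A)$ to an explicit triangular model $T=\left(\begin{smallmatrix}e^{2\pi i\theta}&t\\0&e^{-2\pi i\theta}\end{smallmatrix}\right)$ with a single Fourier mode chosen at the dominant resonance $r$, computes the analogue of $P_{(k)}$ for $T$ exactly (Lemma~\ref{TX}), and controls the error by the decay estimates built into $(C_0,c_0,\epsilon_0)$-goodness (Lemma~\ref{tildeT}). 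The windows $Cn^C<k<ce^{cN}$ then cover all $k$ thanks to Lemma~\ref{following resonance}. None of this is captured by the blanket linear bound of Lemma~\ref{linear}; the resonance-dependent structure of the off-diagonal term $q$ is essential, and your Step~2 bypasses it entirely.
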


{\it Proof of Theorems \ref {small} and \ref {gen}.}
We first prove Theorem \ref {gen}.
By Theorem \ref {CC3}, $(\alpha,A^{(E'-v)})$ is $(C_0,c_0,\epsilon_0)$-good for any $E'$
near $E$ which is in the spectrum.  By Theorem \ref {est}, we get
\be \label {C'}
\mu_x(J) \leq C' |J|^{1/2}
\ee
for any interval containing such an $E'$, and
hence (since $\mu_x$ is supported on the spectrum), for any interval
contained in a neighborhood of $E$.
Let $\sigma:l^2(\Z) \to l^2(\Z)$ be the shift $f(i+1)=\sigma f(i)$.  Then
$\sigma H_{v,\alpha,x} \sigma^{-1}=H_{v,\alpha,x+\alpha}$.  Thus
$\mu^{\sigma f}_{x+\alpha}=\mu^f_x$ and
$\mu^{e_k}_x=\mu^{e_0}_{x+k\alpha} \leq \mu_{x+k\alpha}$.  By (\ref {pix}),
$\mu^f_x(E-\epsilon,E+\epsilon)^{1/2}$ defines a semi-norm on $l^2(\Z)$.
Therefore, by the triangle inequality,
$\mu^f_x(J)^{1/2} \leq \sum_{k \in \Z} |f(k)| (\mu_{x+k\alpha}(J))^{1/2}$,
and the result follows immediately from (\ref {C'}).

Theorem \ref {small} is proved analogously, using
Theorems \ref {Cc} and \ref {CC2} to establish
appropriate $(C,c)$-goodness.
\qed

It therefore remains to prove Theorem \ref{est} which we do in Section \ref{4.1}.

Through the end of this section, $A=A^{(E-v)}$.  We will use $C$ and $c$ for
large and small constants that only depend on $C_0$, $c_0,\epsilon_0,$ and $\alpha$.

\subsection{Spectral measures and $m$-functions}

In the study of $\mu=\mu_x$, we will use a
result of \cite {JL1} (or its improvement in \cite{kkl}),
interpreted in terms of cocycles.  In the definition
of the $m$-functions below, we follow the notation of \cite {JL2}.

We will consider energies $E+i\epsilon$, $E \in \R$, $\epsilon>0$.
Then there are non-zero solutions $u^\pm$
of $H u^\pm=(E+i\epsilon) u^\pm$ which
are $l^2$ at $\pm \infty$, well defined up to normalization.  We define
\be
m^\pm=\mp \frac {u^\pm_1} {u^\pm_0}.
\ee

It coincides with the Weyl-Titchmarsh $m$-function which is the Borel
transform of the spectral measure $\mu^\pm=\mu^\pm_{e_0}$ of the corresponding
half-line problem
with Dirichlet boundary conditions:
$$m^\pm(z)=\int\frac{d\mu^\pm(x)}{x-z},$$ (e.g.\cite{Cy}).

Thus $m^\pm$ has positive
imaginary part for every $\epsilon>0$.

Let
\be \label{M}
M(E+i\epsilon)=\int \frac {1} {E'-(E+i\epsilon)} d\mu(E').
\ee

Notice that $M(E+i\epsilon) \in \H=\{z,\, \Im z>0\}$.
We have 
\be\la{imm}
\Im M(E+i\epsilon)\geq \frac 1{2\epsilon} \mu(E-\epsilon,E+\epsilon).\ee

Then, as discussed in \cite {JL2},
\be
M=\frac {m^+ m^--1} {m^++m^-}.
\ee

As in \cite {JL2}, we define $m^+_\beta=R_{-\beta/2\pi} \cdot m^+,$
or, more generally, $$z_\beta =R_{-\beta/2\pi}z .$$  Those are Borel
transforms of the half-line spectral measures $\mu^\beta=\mu_{e_0}^\beta$ of
operator $H$ on $l^2([0,\infty))$ with boundary conditions $u_0\cos\beta+u_1\sin\beta=0.$ Here we
make use of the action of $\SL(2,\C)$ on $\overline \C$, $$\left (\bm
a&b\\c&d \em \right ) \cdot z=\frac {az+b} {cz+d}.$$ Let $\psi(z)=
\sup_\beta |z_\beta|.$  We have
\be \label {psi1}
\psi(z)^{-1} \leq \Im z \leq |z| \leq \psi(z).
\ee
where the first inequality easily follows from the invariance of
$\phi$, see below.
It was shown in \cite{dkl} that, as a 
corollary of the maximal modulus principle, one obtains
\be \la{Mpsi}
|M| \leq \psi(m^+).
\ee
This also can be shown directly by the following computation, that gives some more quantitative estimates.
Let $$\phi(z)=\frac {1+|z|^2} {2 \Im z}.$$  If $z \in \H$ then $\phi(z) \geq
1$. $\phi(z)$ is invariant with respect to the action of $R_{\beta}.$  Thus
the maximum of $|z_\beta|$ is attained when $z_\beta$ is purely imaginary
with $\Im z_\beta >1$ and is  easily checked to be equal to
$\phi(z)+(\phi(z)^2-1)^{1/2}$. Thus $$\psi(z)=\phi(z)+(\phi(z)^2-1)^{1/2}.$$
We can compute
\be \la{phim}
\phi(M)=\frac {\phi(m^+)\phi(m^-)+1} {\phi(m^+)+\phi(m^-)}
\ee
which implies $\phi(M) \leq \phi(m^+)$ and hence
\be \label {psi2}
\psi(M) \leq \psi(m^+)
\ee
(whatever the value of $m^- \in\H$).
By (\ref {psi1}), this gives (\ref{Mpsi}).

For $k \geq 1$ integer, let
\be
P_{(k)}=\sum_{j=1}^k A_{2j-1}^*(x+\alpha)
A_{2j-1}(x+\alpha).
\ee
Then $P_{(k)}$ is an increasing family of
positive self-adjoint linear maps.  In
particular, $\|P_{(k)}\|$, $\frac {\det P_{(k)}} {\|P_{(k)}\|}$
and $\det P_{(k)}$ are increasing positive functions.  It is not
difficult to see that $\|P_{(k)}\|$ (and hence $\det P_{(k)}$) is also
unbounded (since $A_j\in\SL(2,\R)$ implies $\tr P_{(k)} \geq 2k$).

\begin{lemma} 

Let $\epsilon$ be such that $\det P_{(k)}=\frac {1} {4 \epsilon^2}$. 
Then
\be \la{524}
C^{-1}<\frac {\psi(m^+(E+i\epsilon))} {2\epsilon \|P_{(k)}\|}<C.
\ee

\end{lemma}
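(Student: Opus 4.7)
The plan is to reduce the claimed two-sided bound (\ref{524}) to the Jitomirskaya--Last subordinacy estimate (\cite{JL1}, in the sharp form of \cite{kkl}) reinterpreted in cocycle language. The central point is that $P_{(k)}$ is exactly the truncated Gram matrix of the two fundamental real solutions of $Hu=Eu$. Concretely, let $\phi,\psi$ be the fundamental solutions of $Hu=Eu$ at phase $x+\alpha$ with $(\phi_0,\phi_1)=(0,1)$ and $(\psi_0,\psi_1)=(1,0)$. Since $(u_j,u_{j-1})^T=A_j(x+\alpha)(u_1,u_0)^T$, the columns of $A_{2j-1}(x+\alpha)$ are $(\phi_{2j-1},\phi_{2j-2})^T$ and $(\psi_{2j-1},\psi_{2j-2})^T$, and summing over $j=1,\dots,k$ telescopes to
\[
P_{(k)}=\begin{pmatrix} \sum_{n=0}^{2k-1}\phi_n^2 & \sum_{n=0}^{2k-1}\phi_n\psi_n \\ \sum_{n=0}^{2k-1}\phi_n\psi_n & \sum_{n=0}^{2k-1}\psi_n^2 \end{pmatrix}.
\]
Writing $\lambda_1\geq\lambda_2$ for its eigenvalues, one has $\lambda_1\asymp\|P_{(k)}\|$ and $\lambda_1\lambda_2=\det P_{(k)}=1/(4\epsilon^2)$ by the choice of $k$.

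Next I would invoke the JL/KKL estimate in rotated form. For a boundary condition $\beta$, let $u^\beta=\phi\cos\beta-\psi\sin\beta$ and $v^\beta=\phi\sin\beta+\psi\cos\beta$ be the rotated pair of solutions, and let $L(\beta)$ be defined by $\|u^\beta\|_{L(\beta)}\|v^\beta\|_{L(\beta)}=1/(2\epsilon)$. The JL/KKL theorem gives $|m^+_\beta(E+i\epsilon)|\asymp \|v^\beta\|_{L(\beta)}/\|u^\beta\|_{L(\beta)}$. Choosing $\beta_*$ so as to diagonalize $P_{(k)}$ gives $\|u^{\beta_*}\|_{2k}^2=\lambda_2$ and $\|v^{\beta_*}\|_{2k}^2=\lambda_1$, so that $\|u^{\beta_*}\|_{2k}\|v^{\beta_*}\|_{2k}=\sqrt{\det P_{(k)}}=1/(2\epsilon)$, i.e.\ $L(\beta_*)=2k$, and hence
\[
|m^+_{\beta_*}(E+i\epsilon)|\asymp\sqrt{\lambda_1/\lambda_2}=2\epsilon\,\lambda_1\asymp 2\epsilon\,\|P_{(k)}\|,
\]
producing the lower bound $\psi(m^+)\geq c\,2\epsilon\,\|P_{(k)}\|$ at once. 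The matching upper bound follows by applying JL/KKL at arbitrary $\beta$ and noting that the ratio $\|v^\beta\|_L^2/\|u^\beta\|_L^2$ is bounded by the ratio of eigenvalues of the Gram matrix at scale $L$, which is $\lambda_1/\lambda_2\asymp 4\epsilon^2\|P_{(k)}\|^2$ when $L=2k$.

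The main technical obstacle is that the JL length scale $L(\beta)$ depends on $\beta$, whereas the definition of $P_{(k)}$ uses the fixed scale $2k$. Passing to the supremum over $\beta$ therefore requires that $L(\beta)$ stay within a bounded multiplicative factor of $2k$ for every $\beta$. This is exactly where goodness enters: Lemma \ref{linear} gives polynomial control $\|A_n\|_0\leq C(1+n)$, so all truncated solution norms $\|u^\beta\|_L^2,\|v^\beta\|_L^2$ grow only polynomially in $L$, and hence $L(\beta)$ cannot drift far from $2k$ as $\beta$ varies. Once this polynomial pinning of the scales is in place, the JL/KKL input closes the two-sided comparison (\ref{524}) cleanly.
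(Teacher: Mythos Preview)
Your identification of $P_{(k)}$ as the Gram matrix of the fundamental solutions and your lower bound argument at the extremal $\beta_*$ are correct and coincide with the paper's approach. The gap is in your upper bound and in your claim that goodness is required.

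The lemma as stated carries \emph{no} goodness hypothesis; it holds for arbitrary bounded $v$. Your assertion that ``this is exactly where goodness enters'' via Lemma~\ref{linear} is therefore a genuine misunderstanding. The paper's argument for the upper bound avoids the scale-drift issue you raise by a much simpler observation that you overlooked: since $\det P_{(k)}=\inf_\beta \|u^\beta\|_{2k}^2\|u^{\beta+\pi/2}\|_{2k}^2$ (the infimum being attained precisely at the eigenvectors of $P_{(k)}$), one has
\[
\|u^\beta\|_{2k}\,\|u^{\beta+\pi/2}\|_{2k}\;\geq\;\sqrt{\det P_{(k)}}\;=\;\frac{1}{2\epsilon}
\]
for \emph{every} $\beta$. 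Hence the Jitomirskaya--Last scale $L(\beta)$ satisfies $L(\beta)\leq 2k$ automatically, with no dynamical input whatsoever. Monotonicity of $L\mapsto\|u\|_L$ then gives
\[
|m^+_\beta(E+i\epsilon)|\;<\;(5+\sqrt{24})\cdot 2\epsilon\,\|u^{\beta+\pi/2}\|_{L(\beta)}^2\;\leq\;(5+\sqrt{24})\cdot 2\epsilon\,\|u^{\beta+\pi/2}\|_{2k}^2\;\leq\;(5+\sqrt{24})\cdot 2\epsilon\,\|P_{(k)}\|,
\]
closing the upper bound uniformly in $\beta$. Your proposed route through polynomial pinning of $L(\beta)$ via Lemma~\ref{linear} might be made to work, but it imports an unnecessary hypothesis and obscures the fact that (\ref{524}) is a completely general statement about the relationship between $m^+$ and $P_{(k)}$.
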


\begin{pf}

Let $(u^\beta_j)_{j \geq 0}$
satisfy \be \la{sol} A(x+j\alpha) \cdot \left (\bm
u^\beta_j\\u^\beta_{j-1} \em \right )=\left (\bm u^\beta_{j+1}\\u^\beta_j
\em \right ),\ee $$u^\beta_0 \cos \beta+u^\beta_1 \sin \beta=0,\;|u^\beta_0|^2+|u^\beta_1|^2=1.$$  For integer $L$ define
\be \la{ul}
\|u\|_L=\left (\sum_{j=1}^{L} |u_j|^2
\right )^{1/2}.
\ee
Unlike \cite{JL1} it will be sufficient for us here to deal with the ``discrete''
definition (\ref{ul}) of $\|u\|_L,$ because we only deal with bounded
potentials, see proof of Corollary \ref{psieps}.

Theorem 1.1 of \cite {JL1} can be stated as follows
(see (2.13) in \cite {JL2}).
If $$\|u^\beta\|_L \|u^{\beta+\pi/2}\|_L=\frac {1} {2 \epsilon}$$ then
\be \la{5241}
5-\sqrt {24}<|m^+_\beta(E+i\epsilon)| \frac {\|u^\beta\|_L}
{\|u^{\beta+\pi/2}\|_L}<5+\sqrt {24}.
\ee
In other words,
\be
5-\sqrt {24}<\frac {|m^+_\beta(E+i\epsilon)|} {2 \epsilon
\|u^{\beta+\pi/2}\|^2_L}<5+\sqrt {24}.
\ee

It is immediate to see that if $L=2k$ then \be \la{9*}\|u^\beta\|^2_L=
\langle P_{(k)} \left (\bm u_1^\beta\\u_0^\beta\em \right ),\left (\bm
u_1^\beta\\u_0^\beta\em \right ) \rangle \leq \|P_{(k)}\|,\ee
with equality for $\beta$ maximizing $\|u^\beta\|_L^2$.  Thus
\be \la{detinf}
\det P_{(k)}=\inf_\beta \|u^\beta\|^2_L
\|u^{\beta+\pi/2}\|^2_L,
\ee
the infimum being attained at the critical points
of $\beta \mapsto \|u^\beta\|^2_L$.  We conclude that if
$\|\det P_{(k)}\|=\frac {1}
{4 \epsilon^2}$, then for every $\beta$,
$$\frac {|m^+_\beta(E+i\epsilon)|} {2 \epsilon \|P_{(k)}\|}<5+\sqrt {24},$$
and if $\beta$ is such that $\|u^{\beta+\pi/2}\|^2_L$ is maximal then
$$\frac {|m^+_\beta(E+i\epsilon)|} {2 \epsilon \|P_{(k)}\|}>5-\sqrt {24}.$$ This together gives (\ref{524}) with $C=5+\sqrt {24}$.
\end{pf}
\begin{rem}
Replacing (\ref{5241}) with a result of \cite{kkl} we can obtain by the same argument
that if $\epsilon$ is such that $\det P_{(k)}=\frac {1} {\epsilon^2},$ 
then  \be \la{23}
2-\sqrt {3}<\frac {\psi(m^+(E+i\epsilon))} { \epsilon \|P_{(k)}\|}<2+\sqrt {3}.
\ee
We note that $\det  P_{(k)}$ is precisely the Hilbert-Schmidt norm of operator $K$ in 
\cite{kkl} at scale $L.$ The exact value of $C$ in (\ref{524}) is not important for the present argument. 
\end{rem}

\subsection{Proof of Theorem \ref {est}}\la{4.1}

We need to estimate $\|P_{(k)}\|$ and $\det
P_{(k)}=\|P_{(k)}\| \|P_{(k)}^{-1}\|^{-1}$.  More precisely, we will show
that $\|P_{(k)}\| \leq C \|P_{(k)}^{-1}\|^{-3}$, which is just enough for
our purposes.

\begin{rem}

By Lemma \ref{linear}, we have $\|P_{(k)}\| \leq C k^3$, so to estimate
$\|P_{(k)}\| \leq C \|P_{(k)}^{-1}\|^{-3}$ it would be enough to show that
$\|P_k^{-1}\| \leq C k^{-1}$.  We do not know whether the latter estimate
holds.

\end{rem}

A key point will be to compare the dynamics of $(\alpha,A)$
with the dynamics of $(\alpha,T)$ where $T$ is in some particularly simple
triangular form.  Below, the notation $a \approx b$ ($a,b>0$)
denotes $C^{-1} a \leq b \leq C a$.

\begin{lemma} \label {TX}

Let $$T(x)=\left (\bm e^{2 \pi i \theta} & t(x)\\0&e^{-2 \pi i \theta} \em
\right ),$$ where $t$ has a single non-zero Fourier coefficient,
$t(x)=\hat t_r e^{2 \pi i r x}$.  Let $X=\sum_{j=1}^k T^*_{2 j-1}
T_{2j-1}$.  Then
\be
\|X\| \approx k (1+|\hat t_r|^2
\min \{k^2,\|2\theta-r\alpha\|_{\R/\Z}^{-2}\}),
\ee
\be
\|X^{-1}\|^{-1} \approx k.
\ee

\end{lemma}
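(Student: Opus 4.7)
First, I would iterate $T$ (upper triangular with constant diagonal) to obtain
\[
T_n(x)=\begin{pmatrix} e^{2\pi in\theta} & p_n(x) \\ 0 & e^{-2\pi in\theta}\end{pmatrix},\qquad p_n(x)=\hat t_r\,e^{2\pi i(n-1)\theta}\,e^{2\pi irx}\,S_n,
\]
where $S_n:=\sum_{l=0}^{n-1}e^{2\pi ily}$ and $y:=r\alpha-2\theta$. The Hermitian matrix $T_{2j-1}^*T_{2j-1}$ has diagonal $(1,\,1+|p_{2j-1}|^2)$ and $(1,2)$-entry $e^{-2\pi i(2j-1)\theta}p_{2j-1}$; summing over $j$ and observing that the phases cancel cleanly in the off-diagonal, one finds $X$ Hermitian with
\[
X_{11}=k,\quad X_{22}=k+|\hat t_r|^2 R,\quad X_{12}=\hat t_r\,e^{-2\pi i\theta}\,e^{2\pi irx}\,\Sigma,
\]
where $R:=\sum_{j=1}^k|S_{2j-1}|^2$ and $\Sigma:=\sum_{j=1}^k S_{2j-1}$. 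The spectrum of $X$ depends only on $\tr X$ and $|X_{12}|$, and is independent of $x$.

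Since $\|X\|\approx \tr X=2k+|\hat t_r|^2 R$, the first claim reduces to $R\approx k\min(k^2,d^{-2})$ with $d:=\|y\|_{\R/\Z}$. I would use $|S_n|^2=\sin^2(\pi ny)/\sin^2(\pi y)$ and split on $kd$: if $kd\lesssim 1$, Taylor gives $|S_n|\approx n$ for $n\leq 2k-1$, whence $R\approx k^3$; if $kd\gtrsim 1$, geometric summation yields $|\sum_{j=1}^k\cos(2\pi(2j-1)y)|\lesssim |\sin 2\pi y|^{-1}\lesssim d^{-1}$, so $\sum_j\sin^2(\pi(2j-1)y)=k/2+O(d^{-1})\approx k$, and dividing by $\sin^2(\pi y)\approx d^2$ gives $R\approx k/d^2$.

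For the second claim, the upper bound $\|X^{-1}\|^{-1}\leq X_{11}=k$ is immediate from the test vector $e_1$. For the matching lower bound $\det X/\|X\|\gtrsim k$, a naive Cauchy-Schwarz on $\Sigma$ is insufficient because $|\Sigma|^2$ and $kR$ can be of the same order, so I would rely instead on the algebraic identity
\[
kR-|\Sigma|^2=\frac{k^2-|G|^2}{|e^{2\pi iy}-1|^2},\qquad G:=\sum_{j=1}^k e^{2\pi i(2j-1)y},
\]
so that $\det X=k^2+|\hat t_r|^2(k^2-|G|^2)/|e^{2\pi iy}-1|^2$. When $kd\gtrsim 1$, $|G|\leq|\sin 2\pi y|^{-1}\lesssim d^{-1}$ is well below $k$, so $k^2-|G|^2\approx k^2$; when $kd\lesssim 1$, a second-order Taylor expansion yields $|G|^2=k^2-\frac{4\pi^2}{3}k^2(k^2-1)y^2+O(k^6y^4)$, whose deficit divided by $|e^{2\pi iy}-1|^2\approx y^2$ contributes of order $k^4$ to $\det X$, matching $k\|X\|\approx k^2(1+|\hat t_r|^2 k^2)$ in that regime.

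The principal difficulty is precisely this lower bound on $\det X$: in both regimes the Cauchy-Schwarz inequality $|\Sigma|^2\leq kR$ is tight up to constants once $\|X\|\gg k$, so $\det X\geq k^2$ alone falls short. The identity above is what neatly turns the Cauchy-Schwarz deficit into the explicit quantity $k^2-|G|^2$, which is then straightforward to estimate in each regime.
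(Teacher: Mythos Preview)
Your proposal is correct and follows essentially the same route as the paper: both compute $T_n$ explicitly, write $X$ as a $2\times 2$ Hermitian matrix with entries $k$, $x_1$, $x_2$, and then estimate $x_2\approx\|X\|$ and the closed form of $\det X$ in the two regimes $kd\lesssim 1$ and $kd\gtrsim 1$. Your ``Cauchy--Schwarz deficit'' identity $kR-|\Sigma|^2=(k^2-|G|^2)/|e^{2\pi iy}-1|^2$ is exactly the paper's formula $\det X=k^2\bigl(1+\tfrac{|\hat t_r|^2}{|e^{2\pi i\delta}-1|^2}(1-(\tfrac{\sin 2\pi k\delta}{k\sin 2\pi\delta})^2)\bigr)$, just rewritten; your emphasis on why a naive Cauchy--Schwarz bound on $|\Sigma|$ is insufficient is a nice expository point but not a different argument.
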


\begin{pf}

We can compute explicitly $$T_j=\left (\bm e^{2 \pi ij\theta}&t_j\\0&
e^{-2\pi ij\theta} \em \right )$$ with $$t_j(x)=\hat t_r e^{2 \pi i
(rx+(j-1)\theta)} \frac {e^{2 \pi i j \delta}-1} {e^{2 \pi i \delta}-1},$$
where $\delta=r\alpha-2\theta$.
Write $X=\left (\bm k & x_1\\ \overline x_1 & x_2\em
\right )$.  By a straightforward computation,
\be
x_1=\hat t_r e^{2 \pi i (rx-\theta)} \sum_{j=1}^k
\frac {e^{2 \pi i (2j-1) \delta}-1} {e^{2 \pi i \delta}-1}=
\frac {\hat t_r} {e^{2 \pi i \delta}-1} e^{2 \pi i
(rx-\theta)} \left (e^{2 \pi i \delta} \frac {e^{4 \pi i k \delta}-1}
{e^{4 \pi i \delta}-1}-k \right ),
\ee
\be
x_2=k+|\hat t_r|^2 \sum_{j=1}^k
\left (\frac {\sin \pi (2j-1) \delta} {\sin \pi \delta} \right )^2=k
\left (1+\frac {2 |\hat t_r|^2} {|e^{2 \pi i \delta}-1|^2}
\left (1-\frac {\sin 4 \pi
k \delta} {2 k \sin 2 \pi \delta} \right ) \right ).
\ee
From those formulas we conclude that
\be
\det X=k^2 \left (1+\frac {|\hat t_r|^2} {|e^{2 \pi i \delta-1}|^2}
\left (1-\left (\frac {\sin 2 \pi k \delta} {k \sin 2 \pi \delta} \right
)^2 \right ) \right ).
\ee

We first estimate $x_2$.  If $k \|2 \theta-r\alpha\|_{\R/\Z} \geq \frac {1} {12}$
then $$1-\frac {\sin 4 \pi k \delta} {2 k \sin 2 \pi \delta} \approx 1$$ and
$$x_2 \approx k (1+\frac {|\hat t_r|^2} {\|2 \theta-r\alpha\|_{\R/\Z}^2}).$$
If $k \|2 \theta-r\alpha\|_{\R/\Z}<\frac {1} {3}$ then
$$1-\frac {\sin 4 \pi k \delta} {2 k \sin 2 \pi \delta}
\approx (k\|2 \theta- r\alpha\|_{\R/\Z})^2,$$
and $x_2 \approx k (1+k^2 |\hat t_r|^2)$. Since $X$ is positive, we
have 
 $k x_2 \geq |x_1|^2$, and since  $x_2 \geq k$ we have $x_2\geq \max
\{k,x_1\}.$
Thus  $\|X\| \approx x_2$, and
the first estimate follows.
\comm{To estimate $x_1$ we write
$$ |x_1|=\frac {|\hat t_r|} {|e^{2 \pi i \delta}-1|}|k(\frac {\sin 2
  \pi k \delta} {k \sin 2 \pi \delta}-1)+(e^{2\pi ik\delta}-1)\frac
  {\sin 2 \pi k \delta} {k \sin 2 \pi \delta}|.$$
Thus if  $k \|2 \theta-r\alpha\|_{\R/\Z} \geq \frac {1} {6}$ we have
  $|x_1|\leq C \frac  {k|\hat t_r|} {\|2 \theta-r\alpha\|_{\R/\Z}^2}.$
If  $k \|2 \theta-r\alpha\|_{\R/\Z} \leq \frac {1} {6}$ we get
  $|x_1|\leq Ck^2|\hat t_r|.$ In either case it follows that $|x_1|\leq C|x_2|.$
Since we also have $x_2 \geq k$, we obtain  $\|X\| \approx x_2$, and
the first estimate follows.}

We now estimate $\det X$.  For $k=1$ we have $\det X=1$.  Assume that $k>1$.
If $k \|2 \theta-r\alpha\|_{\R/\Z} \geq \frac {1} {12}$
then $$1-\left (\frac {\sin 2 \pi k \delta} {k \sin 2 \pi \delta} \right )^2
\approx 1$$ and $$\det X \approx k^2 (1+\frac {|\hat t_r|^2}
{\|2 \theta-r\alpha\|_{\R/\Z}^2}).$$
If $k \|2 \theta-r\alpha\|_{\R/\Z}<\frac {1} {12}$ then
$$1-\left (\frac {\sin 2 \pi k \delta} {k \sin 2 \pi \delta} \right )^2
\approx (k \|2 \theta-r\alpha\|_{\R/\Z})^2,$$
and $$\det X \approx k^2 (1+k^2 |\hat t_r|^2).$$  This implies the
second estimate, using that
$\|X^{-1}\|^{-1}=\frac {\det X} {\|X\|}$.
\end{pf}

\begin{lemma} \label {tildeT}

Let $T$, $t$
and $X$ be as in the previous lemma.  Let $\tilde T:\R/\Z \to
\SL(2,\C)$.  Let $\tilde X=\sum_{j=1}^k \tilde T_{2j-1}^* \tilde T_{2j-1}$.
Then
\be
\|\tilde X-X\| \leq 1, \quad \text {provided that} \quad
\|\tilde T-T\|_0 \leq c k^{-2} (1+2k \|t\|_0)^{-2}.
\ee

\end{lemma}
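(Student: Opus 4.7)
The plan is to expand the difference and bound each piece with the cocycle-telescoping (Duhamel) identity together with the explicit form of $T_j$ from Lemma~\ref{TX}. Setting $D_j := \tilde T_{2j-1} - T_{2j-1}$, one has
\[
\tilde X - X = \sum_{j=1}^{k} \bigl( T_{2j-1}^{*} D_j + D_j^{*} T_{2j-1} + D_j^{*} D_j \bigr),
\]
so it suffices to bound $\|T_{2j-1}\|_0$ and $\|D_j\|_0$ and sum. The formula for $T_j$ in the previous lemma---upper triangular with unimodular diagonal and off-diagonal entry of modulus at most $j\|t\|_0$---immediately gives $\|T_j\|_0 \leq C(1+j\|t\|_0)$ for $0 \leq j \leq 2k-1$.

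For $\|D_j\|_0$ I would apply the standard identity
\[
\tilde T_n(x) - T_n(x) = \sum_{i=0}^{n-1} \tilde T_{n-1-i}(x + (i+1)\alpha)\, (\tilde T-T)(x + i\alpha)\, T_i(x),
\]
while simultaneously bootstrapping $\|\tilde T_n\|_0 \leq 2\|T_n\|_0$ by induction on $n \leq 2k-1$. Under the smallness hypothesis on $\|\tilde T-T\|_0$ this bootstrap closes, yielding $\|D_j\|_0 \leq C\|\tilde T-T\|_0 \cdot j(1+j\|t\|_0)^2$. Summing then gives an inequality of the form $\|\tilde X - X\|_0 \leq C k^{2} \|\tilde T-T\|_0 (1+2k\|t\|_0)^{\beta}$ plus a lower-order quadratic-in-$\|\tilde T-T\|_0$ remainder, where a careful treatment of the cross sum $\sum_i (1+(j-1-i)\|t\|_0)(1+i\|t\|_0)$ should recover the stated exponent $\beta = 2$; taking $c$ small enough in the hypothesis then forces $\|\tilde X - X\|_0 \leq 1$.

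The main technical obstacle is the bootstrap: propagating $\|\tilde T_n\|_0 \leq 2\|T_n\|_0$ up to $n = 2k-1$ is possible only because $T$ has unimodular diagonal, so $\|T_n\|_0$ grows only linearly in $n$ rather than exponentially. The tight combinatorial accounting of the sums at each stage---in particular the interplay between the two factors $\|T_{j-1-i}\|_0 \|T_i\|_0$ appearing inside the telescoping and the outer factor $\|T_{2j-1}\|_0$ appearing in $\tilde X - X$---is what must be handled carefully to match the stated exponent $(1+2k\|t\|_0)^{-2}$.
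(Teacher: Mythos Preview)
Your approach is the paper's: bound $\|T_j\|_0\le 1+j\|t\|_0$ from the explicit triangular form, control $\|\tilde T_j-T_j\|_0$ by expanding the perturbed cocycle product, and then sum the resulting bounds on $\|\tilde T_j^*\tilde T_j-T_j^*T_j\|_0$ over $j\le k$. The only cosmetic difference is that the paper packages the expansion as the single binomial estimate
\[
\|\tilde T_j-T_j\|_0\;\le\;\sum_{s=1}^{j}\binom{j}{s}\,\|\tilde T-T\|_0^{\,s}\,\max_{1\le i\le j}\|T_i\|_0^{\,s+1}
\]
in place of your Duhamel telescoping with the bootstrap $\|\tilde T_n\|_0\le 2\|T_n\|_0$; both organizations produce the same leading term $\sim j\,\|\tilde T-T\|_0\,(1+j\|t\|_0)^2$.

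Your hesitation about recovering the exponent $2$ is warranted and is not a defect of your argument relative to the paper's. A straight count by either route gives
\[
\|\tilde X-X\|_0\;\le\;C\,k^2\,\|\tilde T-T\|_0\,(1+2k\|t\|_0)^3,
\]
so the natural smallness hypothesis carries exponent $3$. The paper's intermediate claim $\|\tilde T_j-T_j\|_0\le ck^{-1}(1+2k\|t\|_0)^{-1}$ is one power of $(1+2k\|t\|_0)$ stronger than what its own binomial bound yields under the stated hypothesis, so the slack is already present there. This is harmless for the application in Theorem~\ref{blabl}, where only $(1+2k|\hat q_r|)^{-2}\ge ck^{-2}$ is used and an extra power of $k_\Delta$ in the lower bound for $\|\tilde T-T\|_0$ changes nothing downstream.
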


\begin{pf}

Notice that $\|T_j\|_0 \leq 1+j \|t\|_0$, and
\be
\|\tilde T_j-T_j\|_0 \leq \sum_{s=1}^j \binom {j} {s} \|\tilde T-T\|_0^s
\max_{1 \leq i \leq j} \|T_i\|_0^{1+s}.
\ee
Thus, if $\|\tilde T-T\|_0 k^2 (1+2k\|t\|_0)^2$ is small we have
$$\|\tilde T_j-T_j\|_0 \leq c k^{-1} (1+2k\|t\|_0)^{-1},\;1 \leq j \leq
2k-1.$$  This implies
$$\|\tilde T_j^* \tilde T_j-T_j^* T_j\|_0 \leq c k^{-1},\;1 \leq j \leq
2k-1,$$ which gives the estimate.
\end{pf}

\begin{thm} \label {blabl}

Let $n=|n_j|+1<\infty$, $N=|n_{j+1}|$.  Then
\be
\frac {\|P_{(k)}\|} {\|P_{(k)}^{-1}\|^{-3}} \leq C, \quad
C n^{C}<k<c e^{c N}.
\ee

\end{thm}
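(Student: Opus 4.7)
The plan is to leverage $(C_0,c_0,\epsilon_0)$-goodness to conjugate $(\alpha,A)$ to a nearly triangular form, reduce to the single-Fourier triangular model of Lemma \ref{TX} at the frequency $r=n_{j+1}$, and then conclude via Lemma \ref{tildeT}.

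First, I apply goodness to obtain a conjugation $\Phi$ with $\|\Phi\|_{cn^{-C}}\leq Cn^{C}$ bringing $A$ to $\Phi(x+\alpha)A(x)\Phi(x)^{-1}=D+R$, where $D=\left(\bm e^{2\pi i\theta} & 0 \\ 0 & e^{-2\pi i\theta}\em\right)$ and $R=\left(\bm q_1 & q \\ q_3 & q_4\em\right)$ with $\|q_1\|,\|q_3\|,\|q_4\|\leq Ce^{-cN}$ and $\|q\|\leq Ce^{-cn(\ln(1+n))^{-C}}$. Writing $B=\Phi(\cdot+\alpha)A\Phi^{-1}$ and $Q(x)=\sum_{j=1}^k B_{2j-1}^*B_{2j-1}$, the polynomial bounds on $\Phi,\Phi^{-1}$ imply that the ratios $\|P_{(k)}\|/\|P_{(k)}^{-1}\|^{-3}$ and $\|Q\|/\|Q^{-1}\|^{-3}$ differ by at most a polynomial factor $n^{C'}$, which is absorbed once $k>Cn^{C}$ is large enough.

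Next, choose the triangular model
\[
T(x)=\left(\bm e^{2\pi i\theta} & \hat q_{n_{j+1}}\,e^{2\pi in_{j+1}x} \\ 0 & e^{-2\pi i\theta}\em\right)
\]
as in Lemma \ref{TX}, with $r=n_{j+1}$ and $\hat t_r=\hat q_{n_{j+1}}$. To apply Lemma \ref{tildeT} with $\tilde T=B$, one needs $\|B-T\|_0\leq ck^{-2}(1+2k\|t\|_0)^{-2}\leq ck^{-4}$ (using $|\hat q_{n_{j+1}}|\leq\|q\|\leq 1$). The contributions of $q_1,q_3,q_4$ to $B-T$ are $O(e^{-cN})$, which fits inside the tolerance for $k<ce^{cN}$ with $c$ sufficiently small. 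The delicate piece is the non-resonant Fourier tail $q-\hat q_{n_{j+1}}e^{2\pi in_{j+1}x}$: its sup-norm is of order $\|q\|_0$, much larger than $e^{-4cN}$ once $N\gg n$. I would handle this by a refined further conjugation step exploiting the resonance ordering: for $|r'|<N$ with $r'\notin\{n_0,\ldots,n_j\}$, the small-divisor satisfies $\|r'\alpha-2\theta\|^{-1}\leq\max(e^{|r'|\epsilon_0},\|2\theta-n_j\alpha\|^{-1})\leq e^{N\epsilon_0}$, which combined with the analytic decay $|\hat q_{r'}|\leq\|q\|_{cn^{-C}}e^{-2\pi|r'|cn^{-C}}$ permits splitting the modes into an $|r'|<N_0$ part removed by an explicit Eliasson-type coboundary step and an $|r'|\geq N_0$ tail absorbed by Fourier decay, with $N_0$ chosen to balance the two losses.

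Once the reduction to $T$ is in place, Lemma \ref{TX} gives $\|X\|\asymp k(1+|\hat q_{n_{j+1}}|^2\min\{k^2,\|2\theta-n_{j+1}\alpha\|^{-2}\})$ and $\|X^{-1}\|^{-1}\asymp k$. Since $n_{j+1}$ is an $\epsilon_0$-resonance, $\|2\theta-n_{j+1}\alpha\|^{-1}\geq e^{\epsilon_0 N}$; choosing the constant $c$ in the hypothesis $k<ce^{cN}$ strictly smaller than $\epsilon_0$ forces $k\leq\|2\theta-n_{j+1}\alpha\|^{-1}$, so the minimum equals $k^2$ and
\[
\frac{\|X\|}{\|X^{-1}\|^{-3}}\asymp\frac{1}{k^2}+|\hat q_{n_{j+1}}|^2\leq C.
\]
Lemma \ref{tildeT} transfers this bound to $\tilde X$, and pulling back through $\Phi$ recovers the estimate for $P_{(k)}$. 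The principal obstacle is precisely the Fourier reduction step in the second paragraph: the smallness $e^{-cn(\ln n)^{-C}}$ supplied by goodness is much weaker than the $k^{-4}$ tolerance demanded by Lemma \ref{tildeT} once $k$ approaches $e^{cN}$, so one must genuinely exploit the spacing in the resonance sequence through an additional small-divisor argument rather than rely on naive analytic decay alone.
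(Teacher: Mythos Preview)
Your overall architecture (conjugate via goodness, peel off a single Fourier mode, apply Lemmas \ref{TX} and \ref{tildeT}) is the paper's, but the choice of the retained mode $r=n_{j+1}$ is a genuine error, and the ``refined further conjugation'' you sketch cannot be made to work as stated.

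The obstruction is the mode $n_j$. To reach your single-mode model $T$ you must kill $\hat q_{n_j}$ by the coboundary step, and the relevant small divisor is $\|2\theta-n_j\alpha\|^{-1}$. The only upper bound available for this quantity is polynomial in $N$ (from Lemma \ref{following resonance} together with the Diophantine condition one gets $\|2\theta-n_j\alpha\|\geq cN^{-C}$), and there is no a priori upper bound on $N$ in terms of $n$. Since goodness only gives $|\hat q_{n_j}|\leq \|q\|_0\leq Ce^{-cn(\ln(1+n))^{-C}}$, the contribution $|\hat q_{n_j}|\,\|2\theta-n_j\alpha\|^{-1}$ to $\|Y-\id\|_0$ is of order $N^{C}e^{-cn(\ln n)^{-C}}$, which is unbounded (already $N\geq ce^{c\epsilon_0 n}$ makes it blow up). The resulting $\|\Psi\|_0^{16}$ factor then destroys the final inequality. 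Your non-resonant estimate has the same defect: the divisor bound $e^{|r'|\epsilon_0}$ grows faster in $|r'|$ than the Fourier decay $e^{-c|r'|n^{-C}}$ coming from the narrow strip $cn^{-C}$, so the sum over $|r'|\leq N_0$ does not converge for $N_0$ anywhere near $N$. (Also, if $N=\infty$ there is no mode $n_{j+1}$ to retain at all.)

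The paper's remedy is precisely to \emph{keep} the worst mode rather than kill it: one introduces a free cutoff $\Delta>n$, lets $r$ be the minimizer of $\|2\theta-r\alpha\|$ over $|r|\leq\Delta$ (so essentially $r=n_j$, not $n_{j+1}$), and solves the coboundary equation only for the remaining modes $j\neq r$, $|j|\leq\Delta$. For these the Diophantine condition gives the purely polynomial bound $\|2\theta-j\alpha\|^{-1}\leq C\max\{1+|r|,|j|\}^{C}$, so $\|Y-\id\|_0\leq C(1+|r|)^{C}e^{-cn(\ln n)^{-C}}$. The Fourier tail $|j|>\Delta$ is thrown into the remainder $H$, with $\|H\|_0\leq Ce^{-cn^{-C}\Delta}+Ce^{-cN}$; choosing $\Delta\approx n^{C}\ln k$ makes this $\leq ck^{-4}$, exactly matching the tolerance of Lemma \ref{tildeT}, and simultaneously keeps $k_\Delta^{-}=(n^{C}+e^{-cn(\ln n)^{-C}}(1+\Delta)^{C})^{1/2}<k$. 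With this choice of $r$ the final computation reads $\|X\|/\|X^{-1}\|^{-3}\approx k^{-2}+|\hat q_r|^2$, and the $\|\Psi\|^{16}$ prefactor is harmless because it is polynomial in $n$ and $\Delta$ only.
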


\begin{pf}

Let $\Phi$, $q_1$, $q$, $q_3$, $q_4$ be as in the definition of
$(C_0,c_0,\epsilon_0)$-goodness.

Let $\Delta>n$.  Let $|r| \leq \Delta$ minimize
$\|2\theta-r\alpha\|_{\R/\Z}$.  Then $|r| \geq n-1$.
By the Diophantine condition,
\be \label {rdc}
\|2 \theta-j\alpha\|_{\R/\Z} \geq c \max \{1+|r|,|j|\}^{-C}, \quad \text {for}
\quad j \neq r \quad \text {such that} \quad |j| \leq \Delta.
\ee
Decompose $q=t+g+h$ so that $t$ has only
the Fourier mode $r$, $g$ has only the Fourier modes $j \neq r$ with $|j|
\leq \Delta$ and $h$ is the rest.  Then
\be
\Phi(x+\alpha) A(x) \Phi(x)^{-1}=T+G+H,
\ee
where $$T=\left (\bm e^{2 \pi i \theta}&t\\0&e^{-2 \pi i \theta} \em
\right ),\;G=\left (\bm 0&g\\0&0 \em \right ),\;H=\left (\bm
q_1&h\\q_3&q_4\em \right ).$$  By (\ref {q1}) and (\ref {q}),
\be \label {estH}
\|H\|_0 \leq C e^{-c n^{-C} \Delta}+C e^{-c N}.
\ee

Let $Y=\left (\bm 1&y\\0&1 \em \right )$ be such that
$$Y(x+\alpha)(T+G)(x)Y(x)^{-1}=T(x).$$ Then we have 
\be
\hat y_j=-\hat q_j \frac {e^{-2\pi i\theta}}{1-e^{-2\pi
    i(2\theta-j\alpha)}}, \quad \text {for}
\quad j \neq r \quad \text {such that} \quad |j| \leq \Delta,
\ee
and $\hat y_j=0$ if $|j|>\Delta$ or $j=r$.  Thus, by (\ref {q}) and (\ref
{rdc}),
\be \label {estY}
\|Y-\id\|_0=\|y\|_0 \leq \sum_{j \leq C (1+|r|)^C} |\hat y_j|+
\sum_{C (1+|r|)^C<j \leq \Delta}|\hat y_j| \leq C e^{-c n(\ln
  (1+n))^{-C}} (1+|r|)^C.
\ee

Let $\Psi=Y\Phi$.  By $(C_0,c_0,\epsilon_0)$-goodness, $\|\Phi\|_0 \leq C n^C$,
which together with (\ref {estY}) gives
\be \label {psir}
\|\Psi\|_0 \leq C (n^C+e^{-c n(\ln (1+n))^{-C}} (1+|r|)^C).
\ee
Let $k_\Delta \geq 1$ be maximal
such that for $1 \leq k<k_\Delta$, if we let
\be \la {ttt}
\tilde T(x)=\Psi(x+\alpha)A(x)\Psi(x)^{-1} \quad \text {and} \quad \tilde
X=\sum_{j=1}^k \tilde T_{2j-1}^* \tilde T_{2j-1}
\ee
then
\be \label{XX}
\|\tilde X-X\|_0 \leq 1,
\ee
where $X$ is as in Lemma \ref{TX}.
Notice that $$\tilde T-T= Y(x+\alpha) H(x) Y(x)^{-1},$$ so
$$\|\tilde T-T\|_0 \leq \|Y\|_0^2 \|H\|_0.$$  By Lemma \ref {tildeT},
\eqref{XX} implies
\be
\|Y\|_0^2 \|H\|_0 \geq c k^{-2}_\Delta (1+2k_\Delta |\hat q_r|)^{-2} \geq
c k_\Delta^{-4}
\ee
(since (\ref {q}) implies $|\hat q_r| \leq C$), so that (\ref {estH}) and (\ref
{estY}) imply
\be \label {kd}
k_\Delta \geq c \min \{e^{c n^{-C} \Delta},\Delta^{-C} e^{cN}\}.
\ee

Notice that $$\|P_{(k)}(x)\| \leq \|\Psi\|_0^4 \|\tilde X(x+\alpha)\|$$ and
$$\|P_{(k)}^{-1}\|^{-1} \geq \|\Psi\|_0^{-4} \|\tilde
X(x+\alpha)^{-1}\|^{-1}.$$  Since
$\|\tilde X\| \leq \|X\|+1$ and $\|\tilde X^{-1}\|^{-1} \geq
\|X^{-1}\|^{-1}-1$ for $1 \leq k<k_\Delta$,
Lemma \ref {TX} and (\ref {psir}) imply
\be
\|P_{(k)}\| \leq C (n^C+e^{-c n (\ln (1+n))^{-C}} (1+|r|)^C)
k (1+|\hat q_r|^2 k^2), \quad C \leq k<k_\Delta,
\ee
\be
\|P_{(k)}^{-1}\|^{-1} \geq c (n^C+e^{-c n (\ln (1+n))^{-C}} (1+|r|)^C)^{-1}
k, \quad C \leq k<k_\Delta.
\ee
Thus
\begin{align} \label {bla1}
\frac {\|P_{(k)}\|} {\|P_{(k)}^{-1}\|^{-3}} \leq
&C^{(2)} \frac {n^{C^{(2)}}+e^{-c^{(3)} n (\ln (1+n))^{-C^{(2)}}}
(1+|r|)^{C^{(2)}}} {k^2}\\
\nonumber
&+C^{(2)} (n^{C^{(2)}}+e^{-c^{(3)} n (\ln (1+n))^{-C^{(2)}}}
(1+|r|)^{C^{(2)}}) |\hat q_r|^2, \quad \text {for} \quad
C^{(2)}<k<k_\Delta.
\end{align}
By (\ref {q}),
\be \label {bla2}
C^{(2)} (n^{C^{(2)}}+e^{-c^{(3)} n (\ln (1+n))^{-C^{(2)}}}
(1+|r|)^{C^{(2)}}) |\hat q_r|^2 \leq C.
\ee
Let
\be \label {bla3}
k_\Delta^-=(n^{C^{(2)}}+e^{-c^{(3)} n (\ln (1+n))^{-C^{(2)}}}
(1+\Delta)^{C^{(2)}})^{1/2}.
\ee
Then (\ref {bla1}) and (\ref {bla2}) imply
\be
\frac {\|P_{(k)}\|} {\|P_{(k)}^{-1}\|^{-3}} \leq C, \quad
k_\Delta^-<k<k_\Delta.
\ee

In order to conclude, we have to show that for $C n^C<k<c e^{c N}$ there
exists $\Delta>n$ such that $k_\Delta^-<k<k_\Delta$.  But this is clear from
(\ref {kd}) and (\ref {bla3}) that one can find such $\Delta$ with
$\Delta\approx n^C\ln k.$
\end{pf}

\begin{cor}

For $k \geq 1$, we have
$\|P_{(k)}\| \leq C \|(P_{(k)})^{-1}\|^{-3}$.

\end{cor}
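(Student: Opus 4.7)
The plan is to deduce the corollary from Theorem \ref{blabl} by a covering argument. Theorem \ref{blabl} delivers the desired bound $\|P_{(k)}\| \leq C \|P_{(k)}^{-1}\|^{-3}$ whenever $k$ lies in a ``resonance window'' $(C(|n_j|+1)^C, c e^{c|n_{j+1}|})$, so it suffices to show that the countable family of such windows, together with an elementary estimate on a bounded initial range, exhausts all $k \geq 1$.

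First I would dispose of the range $1 \leq k \leq k_0$ by direct estimates. Each summand $A_{2j-1}^*(x+\alpha) A_{2j-1}(x+\alpha)$ appearing in $P_{(k)}$ is a $2 \times 2$ positive matrix of determinant one (since $A \in \SL(2,\R)$), so Minkowski's determinant inequality yields $\det P_{(k)} \geq k^2$. On the other hand, Lemma \ref{linear} gives $\|P_{(k)}\| \leq C k^3$. Using the identity $\|P_{(k)}^{-1}\|^{-1} = \det P_{(k)}/\|P_{(k)}\|$, one obtains
\[
\frac{\|P_{(k)}\|}{\|P_{(k)}^{-1}\|^{-3}} = \frac{\|P_{(k)}\|^4}{(\det P_{(k)})^3} \leq C k^{6},
\]
which is bounded on any bounded range of $k$.

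For $k > k_0$ I would select the appropriate resonance and apply Theorem \ref{blabl}. Given such a $k$, let $j$ be the largest index with $C(|n_j|+1)^C < k$; this is well-defined as soon as $k$ exceeds an absolute constant, since $n_0 = 0$ places $C$ below $k$. If $n_j$ is the last finite resonance, then $|n_{j+1}| = \infty$ and Theorem \ref{blabl} applies immediately. Otherwise the maximality of $j$ forces $k \leq C(|n_{j+1}|+1)^C$, so $|n_{j+1}| \geq (k/C)^{1/C} - 1$; consequently $c e^{c|n_{j+1}|} > k$ as soon as $k_0$ has been taken large enough, since an exponential in $k^{1/C}$ dominates $k$. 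Thus $k$ falls inside the $j$-th window and Theorem \ref{blabl} yields the conclusion.

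The one genuinely non-trivial ingredient is verifying that the windows chain together without gaps for all sufficiently large $k$, and this is where the Diophantine condition enters: by Lemma \ref{following resonance}, $|n_{j+1}|$ grows at least exponentially in $|n_j|$, so the upper endpoint $ce^{c|n_{j+1}|}$ of the $j$-th window dwarfs the lower endpoint $C(|n_{j+1}|+1)^C$ of the $(j+1)$-th window. This ``exponential beats polynomial'' comparison, which drives the maximality argument above, is the main obstacle and the only place where arithmetic information about $\alpha$ is used in the corollary.
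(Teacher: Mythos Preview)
Your argument is correct and is exactly the covering argument that the paper's one-line proof (which just cites Theorem~\ref{blabl} and Lemma~\ref{following resonance}) leaves implicit; your treatment of the initial range via Minkowski's determinant inequality and Lemma~\ref{linear} is a clean way to fill in what the paper omits. One minor remark: your maximality step already secures $k < c e^{c|n_{j+1}|}$ without invoking Lemma~\ref{following resonance}, since the overlap inequality $c e^{c|n_{j+1}|} > C(|n_{j+1}|+1)^C$ you isolate in the final paragraph is a pure exponential-versus-polynomial comparison in the single variable $|n_{j+1}|$ --- so that lemma is not strictly required along your route, even though the paper cites it.
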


\begin{pf}

It follows from Theorem \ref {blabl} and Lemma \ref {following resonance}.
\end{pf}

Set $\eps_k=\sqrt{\frac 1{4\det
P_{(k)}}}.$

\begin{cor}\label{psieps}

We have $\psi(m^+(E+i\epsilon_k)) \leq C \epsilon_k^{-1/2}$

\end{cor}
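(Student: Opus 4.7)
The plan is to combine the previous corollary (which bounds $\|P_{(k)}\|$ in terms of $\|P_{(k)}^{-1}\|^{-1}$) with the $m$-function estimate~\eqref{524} evaluated at the specific scale $\epsilon_k$. The choice $\epsilon_k=\sqrt{1/(4\det P_{(k)})}$ is precisely the one for which \eqref{524} applies, so the first step gives immediately
\be
\psi(m^+(E+i\epsilon_k)) \leq C\, \epsilon_k \|P_{(k)}\|.
\ee

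Next I convert the previous corollary into an estimate for $\|P_{(k)}\|$ alone. Since $P_{(k)}$ is a positive $2\times 2$ matrix, its smallest eigenvalue is $\|P_{(k)}^{-1}\|^{-1}=\det P_{(k)}/\|P_{(k)}\|$. Substituting into $\|P_{(k)}\|\leq C\|P_{(k)}^{-1}\|^{-3}$ gives
\be
\|P_{(k)}\|^4 \leq C\, (\det P_{(k)})^3,
\ee
hence $\|P_{(k)}\|\leq C (\det P_{(k)})^{3/4}=C\,\epsilon_k^{-3/2}$.

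Plugging this back into the first display yields
\be
\psi(m^+(E+i\epsilon_k)) \leq C\, \epsilon_k \cdot \epsilon_k^{-3/2} = C\,\epsilon_k^{-1/2},
\ee
which is the claim. There is no real obstacle here: the statement is essentially a bookkeeping consequence of the two previous results, and the only point worth checking is the algebraic manipulation turning the ratio $\|P_{(k)}\|/\|P_{(k)}^{-1}\|^{-3}\leq C$ into the exponent $3/4$ of $\det P_{(k)}$ that exactly cancels the factor $\epsilon_k$ to produce the sharp $1/2$-H\"older exponent.
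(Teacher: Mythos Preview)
Your proof is correct and follows essentially the same route as the paper: combine the previous corollary with the identity $\|P_{(k)}^{-1}\|^{-1}=\det P_{(k)}/\|P_{(k)}\|$ to obtain $\|P_{(k)}\|\leq C\epsilon_k^{-3/2}$, and then apply \eqref{524}. The only cosmetic difference is that the paper writes the intermediate step as $\|P_{(k)}\|=\det P_{(k)}\,\|P_{(k)}^{-1}\|\leq C\epsilon_k^{-2}\|P_{(k)}\|^{-1/3}$ before solving for $\|P_{(k)}\|$, whereas you go directly to $\|P_{(k)}\|^4\leq C(\det P_{(k)})^3$.
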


\begin{pf}
 We have  $\|P_{(k)}\|=\det
P_{(k)}\|(P_{(k)})^{-1}\|<\frac
C{\epsilon_k^2}\|P_{(k)}\|^{-1/3}.$
Thus $\|P_{(k)}\|\leq C\epsilon_k^{-3/2}$ and the statement
follows from (\ref{524}).\end{pf}

{\it Proof of Theorem \ref {est}.}
By (\ref {imm}) it is enough to show that
\be \label {imb}
c \epsilon^{1/2} \leq \Im M(E+i\epsilon) \leq C \epsilon^{-1/2}.
\ee

For any bounded potential and any solution $u$ we have
$\|u\|_{L+1}\leq C \|u\|_L,$ thus by (\ref{detinf}), $\epsilon_{k+1}>c\epsilon_k.$
Since $\frac 1\epsilon \Im
M(E+i\epsilon)$ is monotonic in
$\epsilon$ 
it therefore suffices to prove (\ref {imb})
 for $\epsilon=\epsilon_k.$ But it follows immediately from
Corollary \ref{psieps} and \eqref{psi1},\eqref{psi2}.
\qed

\begin{rem}

Corollary \ref {psieps} can be refined: for any
$0<\epsilon<1$ we have $\psi(m^+(E+i\epsilon)) \leq C \epsilon^{-1/2}$ (thus
it is not necessary to restrict to a subsequence of the $\epsilon_k$).  Indeed, by the definition of $\psi(z)$
it is enough to show that $|m^+_\beta| \leq C \epsilon^{1/2}$ for arbitrary
$\beta$.  Our proof can be easily adapted to show
$1/2$-H\"older continuity of spectral measures
$\mu^+_\beta$ associated to half-line problems
(with appropriate boundary conditions)
whose Borel transform is $m_\beta^+$, so that
\begin{align}
|m^+_\beta(E+i\epsilon)| &\leq\int\frac
{d\mu^+_\beta}{\sqrt{(x-E)^2+\epsilon^2}}\\
\nonumber
&=\int_{0}^{\frac 1{\epsilon}}dt\;\mu\left(\left\{x\in\R\,\left|
\;|x-E|<\sqrt{\frac 1{t^2} -\epsilon^2}
\right.\right\}\right)\\
\nonumber
&<C\epsilon^{-1/2}\int_0^\infty dx\;\frac{x^{1/4}}{(x+1)^{3/2}}.
\end{align}

\end{rem}

\begin{rem}

It would be interesting to obtain estimates on the modulus of absolute
continuity of the spectral measures.  It does not seem unreasonable
that for all $X,\,$
$\mu(X) \leq C |X|^{1/2}.$ 
Heuristically, the densities of the spectral measures
are unbounded just because of the presence of those countably many
(but quickly decaying) square-root singularities located at the
gap boundaries.  We point out that this is extremely similar
to what is expected from the densities of physical measures of
typical chaotic unimodal maps.
\end{rem}


\begin{thebibliography}{BKNS}

\bibitem[Am]{Am} Amor, S. H\"older continuity of the rotation number for quasi-periodic co-cycles in ${\rm SL}(2,\Bbb R)$.  Comm. Math. Phys.  287  (2009),  no. 2, 565--588.


\bibitem[A1]{A1} Avila, A. Absolutely continuous spectrum for the almost
  Mathieu operator with subcritical coupling. Preprint (www.arXiv.org).

\bibitem[A2]{akl} Avila, A. Absolute continuity without almost periodicity. 
In preparation.

\bibitem[A3]{A2} Avila, A.  Global theory of one-frequency Schr\"odinger
operators I: stratified analyticity of the Lyapunov exponent and the
boundary of nonuniform hyperbolicity.  Preprint (www.arXiv.org).

\bibitem[A4]{A3} Avila, A.  Global theory of one-frequency Schr\"odinger
operators II: acriticality and the finiteness of phase transitions.  In
preparation.


\bibitem[AFK]{AFK} Avila, A.; Fayad, B.; Krikorian, R.
A KAM scheme for $\SL(2,\R)$ cocycles over Liouvillean rotations.  In
preparation.

\bibitem[AJ]{AJ}  Avila, A.;Jitomirskaya, S. Almost localization and almost reducibility, to appear in JEMS.



\bibitem[AK1]{AK} Avila, A.; Krikorian, R. Reducibility or non-uniform hyperbolicity for quasiperiodic Schr\"odinger
cocycles.  Ann. of Math. (2)  164  (2006),  no. 3, 911--940.

\bibitem[AK2]{AK2} Avila, A.; Krikorian, R.
Monotonic cocycles.
In preparation.


\bibitem[Be]{ber} Berezanskii, Y.
Expansions in eigenfunctions of selfadjoint operators. Transl. Math. Monogr.,
Vol. 17. Providence, RI: Am. Math. Soc. 1968.

\bibitem[B1]{B1}  Bourgain, J. H\"older regularity of
   integrated density of states for the almost Mathieu operator in a
   perturbative regime. Lett. Math. Phys. 51 (2000), no. 2, 83--118.
 

\bibitem[B3]{B2} Bourgain, J. Green's function
estimates for lattice Schr\"odinger operators and applications. Annals
of Mathematics Studies, 158. Princeton University Press, Princeton,
NJ, 2005. x+173 pp.

\bibitem[BG]{BG} Bourgain, J.; Goldstein, M.
On nonperturbative localization with quasi-periodic potential.  Ann. of
Math. (2)  152  (2000),  no. 3, 835--879.




\bibitem[BJ1]{BJ1} Bourgain, J.; Jitomirskaya, S. Continuity of the Lyapunov
exponent for quasiperiodic operators with analytic potential. Dedicated to
David Ruelle and Yasha Sinai on the occasion of their 65th birthdays.  J.
Statist. Phys.  108  (2002),  no. 5-6, 1203--1218.

\bibitem[BJ2]{BJ2} Bourgain, J.; Jitomirskaya, S.
Absolutely continuous spectrum for 1D quasiperiodic operators.
Invent. Math. 148 (2002), no. 3, 453--463.

\bibitem[BJ3]{BJ3} Bourgain, J.; Jitomirskaya, S.
Anderson localization for the band model.
Lecture Notes in Math. 1745, 67-79, Springer, Berlin, 2000.

\bibitem[CL]{Cy} Carmona, R.,  Lacroix, J.  Spectral Theory of Random
Schr\"odinger Operators.  Boston, MA:  Birkhauser 1990.


\bibitem[DKL]{dkl}Damanik, D., Killip, R., Lenz, D. Uniform spectral
  properties of one-dimensional
  quasicrystals. III. $\alpha$-continuity. Comm. Math. Phys. 212
  (2000), no. 1, 191--204.



\bibitem[DiS]{ds}Dinaburg, E.; Sinai, Ya. The one-dimensional 
Schr\"{o}dinger equation with a quasi-periodic potential. Funct. Anal.
Appl.{\bf 9} (1975), 279--289.
\bibitem[E]{E} Eliasson, L. H.
Floquet solutions for the $1$-dimensional quasi-periodic
Schr\"odinger equation.
Comm. Math. Phys. 146 (1992), no. 3, 447--482.


\bibitem[GS2]{GS2} Goldstein, M.; Schlag, W.
Fine properties of the integrated density of states and a quantitative
separation property of the Dirichlet eigenvalues.  Geom. Funct. Anal.  18
(2008),  no. 3, 755--869.

\bibitem[GS3]{GS3} Goldstein, M.; Schlag, W.
On resonances and the formation of gaps in the spectrum of quasi-periodic
Schr\"odinger equations.  To appear in Annals of Math.





\bibitem[J]{J} Jitomirskaya, Svetlana Ya. Metal-insulator transition for the
almost Mathieu operator.  Ann. of Math. (2)  150  (1999),  no. 3,
1159--1175.



\bibitem[JL2]{JL1} Jitomirskaya, Svetlana; Last, Yoram Power-law subordinacy
and singular spectra. I. Half-line operators.  Acta Math.  183  (1999),  no.
2, 171--189.

\bibitem[JL3]{JL2} Jitomirskaya, Svetlana; Last, Yoram
Power law subordinacy and singular spectra. II. Line operators.  Comm. Math.
Phys.  211  (2000),  no. 3, 643--658.



\bibitem[KKL]{kkl}Killip, Rowan; Kiselev, Alexander; Last, Yoram Dynamical upper bounds on wavepacket spreading. Amer. J. Math. 125 (2003), no. 5, 1165--1198. 


\bibitem[LS]{LS} Last, Yoram; Simon, Barry
Eigenfunctions, transfer matrices, and absolutely continuous spectrum of
one-dimensional Schr\"odinger operators.  Invent. Math.  135  (1999),  no. 2,
329--367.



\bibitem[P2]{puig2} Puig, Joaquim A nonperturbative Eliasson's reducibility theorem. Nonlinearity 19 (2006), no. 2, 355--376.
\bibitem[R]{reml} Remling, C. The absolutely continuous spectrum of Jacobi matrices. Preprint.




\end{thebibliography}
\end{document}